\newtheorem{theorem}{Theorem}
\newtheorem{lemma}{Lemma}
\begin{document}

\thispagestyle{empty}

\title[Resonance in isochronous systems with decaying perturbations]{Resonance in isochronous systems with decaying oscillatory perturbations}

\author[O.A. Sultanov]{Oskar A. Sultanov}

\address{
Institute of Mathematics, Ufa Federal Research Centre, Russian Academy of Sciences, Chernyshevsky street, 112, Ufa 450008 Russia.}
\email{oasultanov@gmail.com}


\maketitle

{\small
\begin{quote}
\noindent{\bf Abstract.} 
Non-autonomous perturbations of isochronous systems in the plane are considered. It is assumed that the intensity of perturbations decays with time, and the frequency is asymptotically constant with the limiting value satisfying a resonance condition. We discuss the emergence of attracting resonant solutions with an asymptotically constant amplitude. By combining the averaging technique and the Lyapunov function method, we show that this behaviour can occur in the phase locking and phase drifting regimes. The conditions that guarantee the existence and stability of such resonant dynamics are described.
 \medskip

\noindent{\bf Keywords: }{isochronous system, decaying perturbations, resonance, phase-locking, phase drifting, stability}

\medskip
\noindent{\bf Mathematics Subject Classification: }{34C29, 34D20, 34E10}

\end{quote}
}
{\small
\section*{Introduction}
An oscillatory system is called isochronous if its natural frequency does not depend on the amplitude (see, for example,~\cite{FC08}). In this paper, the isochronous systems in the plane with time-dependent oscillatory perturbations are considered. It is assumed that the intensity of perturbations decays with time, and the frequency is asymptotically constant and satisfies the resonance condition. We study long-term dynamics of the perturbed systems and discuss the emergence of attracting states with an asymptotically constant non-zero amplitude. Such effects in perturbed oscillatory systems are usually associated with the nonlinear resonance phenomenon~\cite{BVC79,SUZ88}.

Various resonant effects of periodic perturbations on isochronous systems have been considered in many papers. In particular, the boundedness of perturbed solutions was studied in~\cite{Liu09,BF09}, the emergence of unbounded solutions and the escape of solutions from the period annulus were investigated in~\cite{OR19,FF05unb,R20}. However, the nonlinear resonance phenomenon in isochronous systems with decaying oscillatory perturbations has not yet been discussed. Note that the qualitative properties of solutions to various classes of asymptotically autonomous systems were studied in~\cite{LM56,LDP74,HRT94,LRS02,KS05,MR08,OS22Non}. Decaying perturbations of oscillatory systems have been considered in several papers. In particular, the asymptotic integration of linear systems with oscillatory decaying coefficients was discussed in~\cite{MP85,PNN07,BN10}. The bifurcations near the equilibrium and the asymptotic behaviour at infinity of solutions to nonlinear systems were analyzed in~\cite{OS21DCDS,OS21JMS,OS23JMS}. To the best of the author's knowledge, the influence of such perturbations on nonlinear systems far from equilibrium and on the emergence of solutions with a steady-state amplitude has not been previously studied. This is the subject of the present paper.

The paper is organized as follows: In Section~\ref{sec1}, the statement of the problem is given. The main results are presented in Section~\ref{sec2}. The justification is provided in the subsequent sections. In particular, in Section~\ref{sec3}, we consider a change of variables that simplifies the system in the first asymptotic terms at infinity in time. Then, in Section~\ref{sec4}, under some natural assumptions on the simplified equations, we show that there are at least two possible regimes admitting asymptotically constant amplitude: the phase-locking and the phase drifting. The existence and stability of solutions with a steady-state amplitude in both regimes are studied in Section~\ref{sec5}. In Section~\ref{sec6}, the theory is illustrated with some examples.

\section{Problem Statement}\label{sec1}
Consider the system of ordinary differential equations
\begin{gather}\label{PS}
\frac{d\varrho}{d t}=f(\varrho,\varphi,S(t),t), \quad \frac{d\varphi}{dt}=\omega + g(\varrho,\varphi,S(t),t),
\end{gather}
where $\omega\in\mathbb R_+$, the functions $f(\varrho,\varphi,S,t)$ and $ g(\varrho,\varphi,S,t)$ are infinitely differentiable, defined for all $\varrho\in (0, \mathcal R]$, $(\varphi,S)\in\mathbb R^2$, $t>0$, and are $2\pi$-periodic with respect to $\varphi$ and $S$. The function $S(t)$ is smooth for $t>0$ and $S'(t)\sim s_0$ as $t\to\infty$, where $s_0$ is a positive parameter, satisfying the resonance condition: there exist coprime integers $\kappa$ and $\varkappa$ such that
\begin{gather}\label{rc}
\kappa s_0=\varkappa \omega.
\end{gather}
It is assumed that the intensity of perturbations vanishes as $t\to\infty$: for each fixed $\varrho$ and $\varphi$,
\begin{gather*}
\lim_{t\to\infty} f(\varrho,\varphi,S(t),t)=
\lim_{t\to\infty} g(\varrho,\varphi,S(t),t)=0.
\end{gather*}
Furthermore, assume that the following asymptotic expansions hold:
\begin{gather}\label{fgS}
\begin{split}
  f(\varrho,\varphi,S,t)\sim &\,\sum_{k=1}^\infty t^{-\frac{k}{q}} f_k(\varrho,\varphi,S), \\
  g(\varrho,\varphi,S,t)\sim &\,\sum_{k=1}^\infty t^{-\frac{k}{q}} f_k(\varrho,\varphi,S), \\
 S'(t)\sim &\, s_0+\sum_{k=1}^\infty t^{-\frac{k}{q}} s_k
\end{split}
\end{gather}
as $t\to\infty$ for all $\varrho\in (0, \mathcal R]$ and $(\psi,S)\in\mathbb R^2$, where the functions $f_k(\varrho,\varphi,S)$ and $g_k(\varrho,\varphi,S)$ are smooth, $2\pi$-periodic in $\varphi$ and $S$, $s_k$ are real parameters, and $q\in\mathbb Z_+$. The series in \eqref{fgS} are asymptotic as $t\to\infty$~\cite[\S 1]{MVF89}. Note that the asymptotic sequence in \eqref{fgS} may be more complex (see, for example,~\cite{PNN07}), but for simplicity, we consider the power scale. 

System \eqref{PS} is asymptotically autonomous and can be viewed as a perturbation of the system
\begin{gather}\label{ls}
\frac{d\hat\varrho}{dt}=0, \quad \frac{d\hat\varphi}{dt}=\omega
\end{gather}
describing isochronous oscillations on the plane $(x_1,x_2)=(\hat \varrho \cos\hat \varphi, - \hat\varrho\sin\hat\varphi)$ with a constant amplitude $\hat\varrho(t)\equiv \varrho_0$ and period $T=2\pi/\omega$. Hence, $\varrho(t)$ and $\varphi(t)$ play the role of the amplitude and phase of the perturbed oscillator, respectively. We study the influence of such decaying perturbations on the dynamics of the isochronous system \eqref{ls}.

Consider the example
\begin{gather}\label{Ex0}
\frac{dx_1}{dt}=x_2, \quad \frac{dx_2}{dt}=-x_1+t^{-1}\mathcal Z(x_2,S(t)),
\end{gather}
with $\mathcal Z(x,S)\equiv (a+b x+c x^2)\sin S$, $S(t)\equiv s_0 t+s_1\log t$, and $a,b,c,s_1\in\mathbb R$. Note that system \eqref{Ex0} in the variables $\varrho=\sqrt{x_1^2+x_2^2}$, $\varphi=-\arctan(x_2/x_1)$ takes the form \eqref{PS} with $q=\omega=1$, 
\begin{gather}\label{fgex0}
f(\varrho,\varphi,S,t)\equiv -t^{-1}\mathcal Z(-\varrho\sin\varphi,S)\sin\varphi, \quad 
g(\varrho,\varphi,S,t)\equiv -t^{-1}\varrho^{-1}\mathcal Z(-\varrho\sin\varphi,S)\cos\varphi.
\end{gather} 
If $a=b=c=0$, then $\varrho(t)\equiv \varrho_0$ and $\varphi(t)\equiv \omega t+\varphi_0$, where $\varrho_0$ and $\varphi_0$ are arbitrary constants. We see that if $s_0=1$, the resonance condition \eqref{rc} holds with $\kappa=\varkappa=1$. It can easily be checked that if $s_0=1$, $a\neq 0$ and $b=c=0$, then all solutions have unlimitedly growing amplitude $\varrho(t) \sim |a/2| \log t$ as $t\to\infty$. If $s_0\neq 1$, $a\neq 0$ and $b=c=0$, then $\varrho(t)\sim\varrho_0$ as $t\to\infty$, where $\varrho_0>0$ is different for different initial data. From numerical analysis of system \eqref{Ex0} it follows that similar dynamics occurs, when $b\neq 0$ and $c\neq 0$ (see Fig.~\ref{FigEx0}, a and b). If $s_0=1$, the solutions with a steady-state amplitude may appear such that $\varrho(t)\sim \varrho_\ast$ as $t\to\infty$ with some one $\varrho_\ast>0$ for different initial data (see Fig.~\ref{FigEx0}, c). We see that the emergence of these solutions is due to the resonance phenomenon. Note that in the latter case there are also solutions with unlimitedly growing amplitude. However, such modes are not discussed in this paper. See~\cite{PM23}, where such resonant modes in systems with limited nonlinearity were studied.

Thus, our goal is to find conditions that ensure the existence and stability of resonant regimes in system \eqref{PS} with an asymptotically constant amplitude $\varrho(t)\sim \varrho_\ast$ as $t\to\infty$.

\begin{figure}
\centering
\subfigure[$c=0$]{
\includegraphics[width=0.3\linewidth]{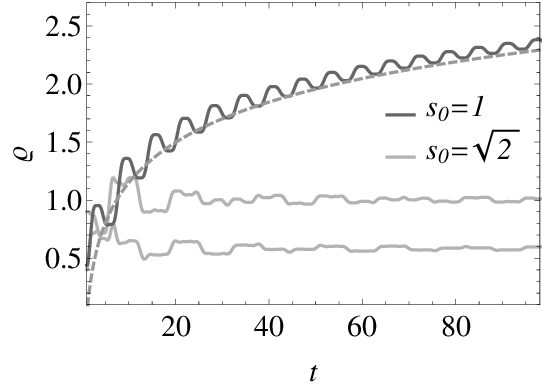}
}
\hspace{1ex}
 \subfigure[$s_0=\sqrt 2$]{
 \includegraphics[width=0.3\linewidth]{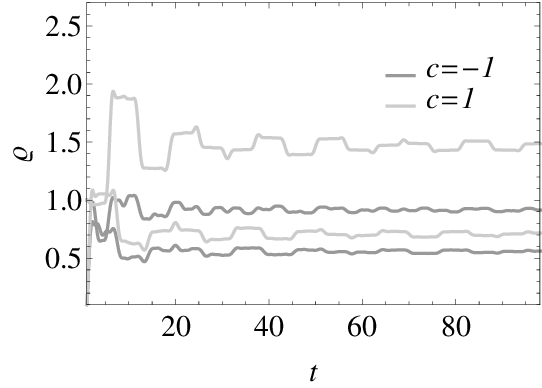}
}
\hspace{1ex}
 \subfigure[$s_0=1$]{
 \includegraphics[width=0.3\linewidth]{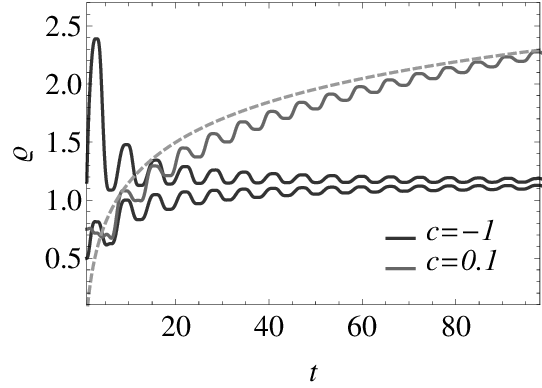}
}
\caption{\small The evolution of $\varrho(t)\equiv \sqrt{x_1^2(t)+x_2^2(t)}$ for solutions of system \eqref{Ex0} with $a=1$, $b=2$, $s_1=0$ and different values of the parameters $c$, $s_0$ and initial data. The dashed curves correspond to $\varrho(t)\equiv (1/2)\log t$.} \label{FigEx0}
\end{figure}

\section{Main results}\label{sec2}
Let us show that the perturbed system can be simplified in the first asymptotic terms as $t\to\infty$. We have the following:
\begin{theorem}\label{Th1}
Let system \eqref{PS} satisfy \eqref{rc} and \eqref{fgS}. Then, for all $N\in\mathbb Z_+$ and $\epsilon\in (0,\mathcal R/2)$ there exist $t_0>0$ and the reversible transformations $(\varrho,\varphi)\mapsto (R,\Psi)\mapsto (r,\psi)$,
\begin{gather}
\label{ch1}
	R(t)=\varrho(t), \quad 
	\Psi(t)=\varphi(t)-\frac{\kappa}{\varkappa}S(t),\\
\label{ch2}
	r(t)=R(t)+\tilde U_N(R(t),\Psi(t),t), \quad 
	\psi(t)=\Psi(t)+\tilde V_N(R(t),\Psi(t),t),
\end{gather}
where the functions $\tilde U_N(R,\Psi,t)$ and $\tilde V_N(R,\Psi,t)$ are smooth, $2\pi$-periodic in $\psi$, and satisfy the inequalities
\begin{gather*}
|\tilde U_N(R,\Psi,t)|\leq \epsilon, \quad |\tilde V_N(R,\Psi,t)|\leq \epsilon, \quad \forall\, R\in(0,\mathcal R], \quad \Psi\in\mathbb R, \quad t\geq t_0,
\end{gather*}
such that system \eqref{PS} can be transformed into
\begin{gather}\label{rpsi}
\begin{split}
&\frac{dr}{dt}=\widehat\Lambda_N(r,\psi,t)+\widetilde\Lambda_N(r,\psi,S(t),t), \quad
\frac{d\psi}{dt}=\widehat\Omega_N(r,\psi,t)+\widetilde\Omega_N(r,\psi,S(t),t),
\end{split}
\end{gather}
where 
\begin{gather}\label{LON}
\widehat\Lambda_N(r,\psi,t)\equiv  \sum_{k=1}^N t^{-\frac{k}{q}}\Lambda_k(r,\psi), \quad 
\widehat\Omega_N(r,\psi,t)\equiv \sum_{k=1}^N t^{-\frac{k}{q}}\Omega_k(r,\psi),
\end{gather}
the functions $\Lambda_k(r,\psi)$, $\Omega_k(r,\psi)$, $\widetilde\Lambda_N(r,\psi,S,t)$, $\widetilde\Omega_N(r,\psi,S,t)$ are defined for all $r\in (0, \mathcal R]$, $(\psi,S)\in\mathbb R^2$, are $2\pi$-periodic in $\psi$ and $2\pi\varkappa$-periodic in $S$. Moreover, the following estimates hold:
\begin{gather}\label{tildest}
\widetilde\Lambda_N(r,\psi,S,t)=\mathcal O(t^{-\frac{N+1}{q}}),\quad \widetilde\Omega_N(r,\psi,S,t)=\mathcal O(t^{-\frac{N+1}{q}}) 
\end{gather}
as $t\to\infty$ uniformly for all $r\in [\epsilon, \mathcal R-\epsilon]$ and $(\psi,S)\in\mathbb R^2$.
\end{theorem}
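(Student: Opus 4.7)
The plan is a two-step normal-form construction matching the two changes of variables in the statement. The first change \eqref{ch1} is a direct substitution: set $R=\varrho$, $\Psi=\varphi-(\kappa/\varkappa)S(t)$. Using $S'(t)\sim s_0$ together with the resonance relation $\omega=(\kappa/\varkappa)s_0$ from \eqref{rc}, the constant drift in the $\varphi$-equation cancels, yielding
\begin{gather*}
\dot R=f\bigl(R,\,\Psi+(\kappa/\varkappa)S(t),\,S(t),\,t\bigr), \qquad \dot\Psi=g\bigl(R,\,\Psi+(\kappa/\varkappa)S(t),\,S(t),\,t\bigr)-(\kappa/\varkappa)(S'(t)-s_0).
\end{gather*}
The $2\pi$-periodicity of $f,g$ in their $\varphi$- and $S$-slots combines, via the shift $\varphi\mapsto\Psi+(\kappa/\varkappa)S$, into $2\pi$-periodicity in $\Psi$ and $2\pi\varkappa$-periodicity in $S$ of the new right-hand sides. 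The asymptotic expansion \eqref{fgS} is inherited with smooth coefficients $\tilde f_k(R,\Psi,S)$, $\tilde g_k(R,\Psi,S)$ depending only on the slow phase $\Psi$ and on the fast phase $S$.

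The second change \eqref{ch2} is built recursively in $N$ by a Krylov–Bogolyubov-type averaging with respect to the fast variable $S$. I would write $\tilde U_N=\sum_{k=1}^N t^{-k/q}U_k(R,\Psi,S)$ and similarly for $\tilde V_N$, and determine the $U_k,V_k,\Lambda_k,\Omega_k$ order by order. Differentiating and using $S'(t)\sim s_0$, the dominant contribution to $(d/dt)(t^{-k/q}U_k)$ has the form $t^{-k/q}s_0\,\partial_S U_k$. Equating coefficients of $t^{-k/q}$ in the $r$-equation leads at each step to a homological equation
\begin{gather*}
s_0\,\partial_S U_k(R,\Psi,S)=F_k(R,\Psi,S)-\Lambda_k(R,\Psi),
\end{gather*}
in which $F_k$ is already determined by $\tilde f_1,\dots,\tilde f_k$, $\tilde g_1,\dots,\tilde g_k$, $s_1,\dots,s_{k-1}$ and the previously constructed $U_1,V_1,\dots,U_{k-1},V_{k-1}$. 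Taking $\Lambda_k$ to be the mean of $F_k$ over $S\in[0,2\pi\varkappa]$ and $U_k=s_0^{-1}\!\int_0^S(F_k-\Lambda_k)\,d\sigma$, normalized to zero mean, produces a smooth solution that is $2\pi$-periodic in $\Psi$ and $2\pi\varkappa$-periodic in $S$. The analogous scheme applied to the $\psi$-equation yields $V_k$ and $\Omega_k$; the key solvability ingredient is $s_0>0$, which is given.

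With $U_k,V_k$ in hand, the remaining estimates are routine. Since $\tilde U_N,\tilde V_N=O(t^{-1/q})$ uniformly for $R\in(0,\mathcal R]$ and $(\Psi,S)\in\mathbb R^2$, choosing $t_0$ large gives $|\tilde U_N|,|\tilde V_N|\le\epsilon$, and the implicit function theorem then makes the map $(R,\Psi)\mapsto(r,\psi)$ a smooth diffeomorphism on $[\epsilon,\mathcal R-\epsilon]\times\mathbb R$ for $t\ge t_0$, with the inverse still $2\pi$-periodic in $\psi$. The remainder $\tilde\Lambda_N,\tilde\Omega_N$ collects two types of contributions: the asymptotic tails of the series \eqref{fgS} beyond order $N$, and the Taylor remainders produced when passing from $(R,\Psi)$ back to $(r,\psi)$ in the already-normalized part; both are $O(t^{-(N+1)/q})$ uniformly on the prescribed set, giving \eqref{tildest}.

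The main obstacle I expect is the bookkeeping in the inductive step: one has to verify that at each order $k$ the function $F_k$ really is a smooth, $2\pi$-periodic-in-$\Psi$, $2\pi\varkappa$-periodic-in-$S$ function built only from data fixed at earlier orders, so that the homological equation has a periodic solution and the construction closes. The structure is standard once the algebra is organized, but keeping track of how the $s_k$ with $k\ge 1$ enter through $\dot\Psi=g-(\kappa/\varkappa)(S'(t)-s_0)$, and how they mix with derivatives of $U_j,V_j$ for $j<k$, is where care is needed.
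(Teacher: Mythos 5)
Your proposal follows essentially the same route as the paper: the resonance-adapted rotation \eqref{ch1}, which cancels the drift via \eqref{rc} and turns the right-hand sides into functions $2\pi$-periodic in $\Psi$ and $2\pi\varkappa$-periodic in $S$, followed by a Krylov--Bogolyubov near-identity averaging in the fast phase $S$ in which $\Lambda_k,\Omega_k$ are the $S$-means making the homological equations solvable in $2\pi\varkappa$-periodic functions, with invertibility and the remainder estimates \eqref{tildest} obtained exactly as you describe. The only slip is the sign in your homological equation --- since the coefficient of $t^{-k/q}$ in $\dot r$ is $F_k+s_0\partial_S U_k+\cdots$ and must equal $\Lambda_k$, the equation reads $s_0\partial_S U_k=\Lambda_k-F_k+\cdots$ rather than $F_k-\Lambda_k$ --- which flips the sign of $U_k$ but does not affect the construction.
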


Note that the averaging transformation described in Theorem~\ref{Th1} can set some terms of \eqref{LON} to zero. Let $n\in [1,q]$ and $m\in[1,q]$ be integers such that 
\begin{gather}\label{asnm}
\begin{split}
&\Lambda_k(r,\psi)\equiv 0, \quad  k<n, \quad \Lambda_n(r,\psi)\not\equiv 0,\\
&\Omega_k(r,\psi)\equiv 0, \quad  k<m, \quad \Omega_m(r,\psi)\not\equiv 0.
\end{split}
\end{gather}
Consider the following truncated system obtained from \eqref{rpsi} by dropping the remainder terms $\widetilde \Lambda_N$ and $\widetilde \Omega_N$:
\begin{gather}\label{trsys}
\frac{d\rho}{dt}=\widehat\Lambda_N(\rho,\phi,t), \quad 
\frac{d\phi}{dt}=\widehat\Omega_N(\rho,\phi,t), \quad t\geq t_0.
\end{gather}
First, let us study possible modes for solutions to system \eqref{trsys} with a steady-state amplitude. Then, we prove the persistent of such regimes in the full system \eqref{rpsi}. 

Note that the global behaviour of solutions to asymptotically autonomous system \eqref{trsys} depends on the properties of the corresponding limiting system
\begin{gather}\label{limsys}
\frac{d\hat \rho}{dt}= t^{-\frac{n}{q}}\Lambda_n(\hat\rho,\hat\psi), \quad 
\frac{d\hat \phi}{dt}= t^{-\frac{m}{q}}\Omega_m(\hat\rho,\hat\psi), \quad t>0.
\end{gather}
Let us consider the dynamics near fixed points of system \eqref{limsys}. Assume that 
\begin{gather}\label{as1}
\exists\, \varrho_\ast\in(0,\mathcal R), \varphi_\ast\in\mathbb R: \quad \Lambda_n(\varrho_\ast,\varphi_\ast)=0, \quad \Omega_m(\varrho_\ast,\varphi_\ast)=0, \quad \mathcal D_{n,m}\neq 0,
\end{gather}
where 
\begin{gather*}
\mathcal D_{n,m}:=\det {\bf A}(\varrho_\ast,\varphi_\ast,1), \quad 
{\bf A}(\rho,\phi,t)\equiv \begin{pmatrix}t^{-\frac{n}{q}}\partial_\rho\Lambda_n(\rho,\phi) & t^{-\frac{n}{q}}\partial_\phi\Lambda_n(\rho,\phi) \\ t^{-\frac{m}{q}}\partial_\rho\Omega_m(\rho,\phi)  & t^{-\frac{m}{q}}\partial_\phi\Omega_m(\rho,\phi)
\end{pmatrix}.
\end{gather*}
Note that if $\mathcal D_{n,m}=0$, the fixed point $(\varrho_\ast,\varphi_\ast)$ is degenerate. This case may correspond to some bifurcation of the limiting system and is not discussed in this paper. The eigenvalues $\mu_1(t)$ and $\mu_2(t)$ of the matrix ${\bf A}(\varrho_\ast,\varphi_\ast,t)$ have the following form:
\begin{gather*}
\mu_1(t)\equiv t^{-\frac{n}{q}}\alpha_1(t), \quad \mu_2(t)\equiv t^{-\frac{m}{q}}\alpha_2(t), 
\end{gather*}
where  $\alpha_{j}(t)\equiv \alpha_j^0$ if $n=m$, and  $\alpha_{j}(t)\sim \alpha_j^0   $ as $t\to\infty$ if $n\neq m$. 
Define the parameters
\begin{align*}
\lambda_n=\partial_\rho \Lambda_n(\varrho_\ast,\varphi_\ast), \quad 
\nu_n=\partial_\varphi \Lambda_n(\varrho_\ast,\varphi_\ast), \quad 
\eta_m=\partial_\rho\Omega_m(\varrho_\ast,\varphi_\ast), \quad 
\omega_m=\partial_\phi\Omega_m(\varrho_\ast,\varphi_\ast).
\end{align*}
Then, we have
\begin{align*}
& \alpha_1^0=
\begin{cases}
\lambda_n, & \text{if} \quad n<m, \\
\frac{1}{2}\left(\lambda_n+\omega_m+\sqrt{(\lambda_n+\omega_m)^2-4\mathcal D_{n,m}}\right), & \text{if} \quad n=m,\\
\omega_m^{-1} \mathcal D_{n,m}, & \text{if} \quad n>m,
\end{cases}
\\ 
& \alpha_2^0=
\begin{cases}
\lambda_n^{-1} \mathcal D_{n,m}, & \text{if} \quad n<m, \\
\frac{1}{2}\left(\lambda_n+\omega_m-\sqrt{(\lambda_n+\omega_m)^2-4\mathcal D_{n,m}}\right), & \text{if} \quad n=m,\\
\omega_m, & \text{if} \quad n>m.
\end{cases}
\end{align*}
The conditions for the stability of the equilibrium $(\varrho_\ast,\varphi_\ast)$ in the limiting system \eqref{limsys} are as follows:
\begin{lemma}\label{Lem1}
Let assumption \eqref{as1} hold with $m=n$.
\begin{itemize}
\item If $\Re \alpha^0_j<0$ for all $j\in\{1,2\}$, then the equilibrium $(\varrho_\ast,\varphi_\ast)$ of system \eqref{limsys} is asymptotically stable;
\item If $\Re \alpha^0_j>0$ for some $j\in\{1,2\}$, then the equilibrium $(\varrho_\ast,\varphi_\ast)$ of system \eqref{limsys} is unstable.
\end{itemize}
\end{lemma}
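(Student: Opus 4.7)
The plan is to exploit the fact that, when $m=n$, the two equations of the limiting system \eqref{limsys} share the same positive factor $t^{-n/q}$. This common factor is merely a (slow) reparameterization of time, so the limiting system is orbitally equivalent to an autonomous planar system, and classical linear stability theory for autonomous ODEs applies.

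Concretely, I would introduce the new time variable
\[
\tau(t)=\int_{t_0}^{t} s^{-n/q}\,ds.
\]
Since by assumption $n\in[1,q]$, we have $n/q\le 1$, so $\tau(t)\to\infty$ as $t\to\infty$, and $\tau:[t_0,\infty)\to[0,\infty)$ is a smooth, strictly increasing bijection with smooth inverse. In the variable $\tau$ the limiting system \eqref{limsys} becomes the autonomous planar system
\[
\frac{d\hat\rho}{d\tau}=\Lambda_n(\hat\rho,\hat\phi),\qquad \frac{d\hat\phi}{d\tau}=\Omega_n(\hat\rho,\hat\phi),
\]
which, by \eqref{as1}, has $(\varrho_\ast,\varphi_\ast)$ as an equilibrium. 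The Jacobian at this point is
\[
\begin{pmatrix}\lambda_n & \nu_n \\ \eta_n & \omega_n\end{pmatrix},
\]
whose eigenvalues coincide with $\alpha_1^0$ and $\alpha_2^0$ in the case $n=m$, by the very formulas displayed before the lemma statement.

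Then I would invoke the classical Lyapunov/linearization theorem for autonomous systems: if both $\Re\alpha_j^0<0$, the equilibrium of the $\tau$-system is (locally) exponentially asymptotically stable; if some $\Re\alpha_j^0>0$, an unstable manifold exists and the equilibrium is unstable. Because $t\mapsto\tau(t)$ is a monotone increasing homeomorphism of $[t_0,\infty)$ onto $[0,\infty)$, both asymptotic stability and instability transfer verbatim back to the original $t$-variable formulation of \eqref{limsys}. I do not expect any serious obstacle: the only delicate point is the divergence $\tau(t)\to\infty$, which is secured precisely by the hypothesis $n\le q$; if $n>q$ were allowed, the new time would remain bounded and an autonomous reduction of this kind could not conclude stability.
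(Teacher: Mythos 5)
Your proof is correct, but it takes a genuinely different route from the paper. You remove the common scalar factor $t^{-n/q}$ by the time change $\tau(t)=\int_{t_0}^t s^{-n/q}ds$ (which diverges precisely because $n\le q$), reduce the limiting system to the autonomous planar system $\dot{\hat\rho}=\Lambda_n$, $\dot{\hat\phi}=\Omega_n$, and invoke the classical linearization theorems; since $t\mapsto\tau$ is an increasing bijection onto $[0,\infty)$ and the reduced system is autonomous, stability and instability transfer back uniformly in the initial time. The eigenvalue identification is right: for $n=m$ the matrix ${\bf A}(\varrho_\ast,\varphi_\ast,1)$ is exactly the Jacobian of $(\Lambda_n,\Omega_n)$, so its eigenvalues are $\alpha_1^0,\alpha_2^0$. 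The paper instead works directly in the non-autonomous setting: it diagonalizes ${\bf A}$ (treating the defective case separately with a modified quadratic form) and builds an explicit Lyapunov function $L({\bf z})=|z_1|^2+|z_2|^2$ satisfying $\frac{dL}{dt}\le-\gamma t^{-n/q}L$, plus a Chetaev function $J({\bf z})=|z_1|^2+(\mathrm{sgn}\,\Re\alpha_2^0)|z_2|^2$ for instability. Your reduction is more elementary for this lemma (and handles the defective case for free), while the paper's quadratic-form machinery is what actually gets reused later: in Lemma~\ref{Lem2} and Theorems~\ref{Th2}--\ref{Th3} the systems carry remainder terms and, when $n\neq m$, two different powers of $t$, so no single time change renders them autonomous and the Lyapunov estimates must be done by hand. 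Both arguments yield the same decay rates (exponential in $\tau$, i.e.\ $\exp\bigl(-c\,t^{1-n/q}\bigr)$ for $n<q$ and algebraic for $n=q$).
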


\begin{lemma}\label{Lem01}
Let assumption \eqref{as1} hold with $m\neq n$.
\begin{itemize}
\item If $\alpha_1^0+\frac{n-m}{2q}\delta_{n,q}<0$ and $\alpha_2^0+\frac{m-n}{2q}\delta_{m,q}<0$, then the equilibrium $(\varrho_\ast,\varphi_\ast)$ of system \eqref{limsys} is asymptotically stable;
\item If $\alpha_1^0>0$, $\alpha_2^0>0$, and $\max\{n,m\}<q$, then the equilibrium $(\varrho_\ast,\varphi_\ast)$ of system \eqref{limsys}  is unstable.
\end{itemize}
\end{lemma}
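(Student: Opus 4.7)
My plan is to linearise the limiting system \eqref{limsys} around $(\varrho_\ast,\varphi_\ast)$, apply a near-identity change of variables that approximately decouples the two time scales $t^{-n/q}$ and $t^{-m/q}$, and then close the stability proof with a time-weighted quadratic Lyapunov function whose weights are activated only in the critical cases $n=q$ or $m=q$.

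Setting $u=\hat\rho-\varrho_\ast$, $v=\hat\phi-\varphi_\ast$ and Taylor-expanding $\Lambda_n$ and $\Omega_m$ gives $\dot u=t^{-n/q}(\lambda_n u+\nu_n v)+O(t^{-n/q}(u^2+v^2))$ and the analogous equation for $\dot v$. Without loss of generality assume $n>m$; the case $n<m$ is symmetric. The variable $v$ is then fast, and the substitution $w=v+(\eta_m/\omega_m)u$ brings the linearised system to an approximately triangular form with diagonal scalars $\alpha_1^0 t^{-n/q}$ and $\alpha_2^0 t^{-m/q}=\omega_m t^{-m/q}$. The off-diagonal residues generated by the triangularisation and the quadratic remainders decay strictly faster in $t$ than the diagonal entries or vanish to higher order in $(u,w)$, so they will be absorbable by Cauchy-Schwarz once the Lyapunov function is in place.

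For the stability claim I would use
\begin{gather*}
\mathcal{L}(u,w,t)=t^{2a_1}u^2+t^{2a_2}w^2,
\end{gather*}
with $a_1=0$ if $n<q$, $a_1=(q-m)/(2q)$ if $n=q$, and analogously $a_2=0$ or $(q-n)/(2q)$ according to whether $m<q$ or $m=q$. A direct computation shows that the leading contribution to the $u^2$-coefficient of $\dot{\mathcal{L}}$ equals $\bigl(2a_1 t^{-1}+2\alpha_1^0 t^{-n/q}\bigr)t^{2a_1}$: in the non-critical regime $n<q$ this reduces to $2\alpha_1^0 t^{-n/q}$, negative by the hypothesis $\alpha_1^0<0$; in the critical regime $n=q$ the same expression equals $2\bigl(\alpha_1^0+(n-m)\delta_{n,q}/(2q)\bigr)t^{-1}\,t^{2a_1}$, negative precisely by the sign condition stated in the lemma. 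The $w^2$-coefficient is handled identically. The cross terms and cubic remainders are of strictly higher order in $t^{-1}$ or in $(u,w)$ and can be absorbed, giving $\dot{\mathcal{L}}\le -c(t)\mathcal{L}$ with $\int^\infty c(t)\,dt=\infty$; hence $\mathcal{L}(t)\to 0$ and $(u,v)\to 0$, which yields asymptotic stability.

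For the instability part ($\alpha_1^0,\alpha_2^0>0$, $\max\{n,m\}<q$), the same decoupling with $\mathcal{W}(u,w)=u^2+w^2$ (the weights $a_1,a_2$ are trivially zero since neither $n$ nor $m$ equals $q$) yields a lower bound $\dot{\mathcal{W}}\ge c(t)\mathcal{W}$ inside a sufficiently small neighbourhood of the equilibrium, with $\int^\infty c(t)\,dt=\infty$; a standard Chetaev-type argument then shows that every non-trivial trajectory leaves this neighbourhood in finite time. The main obstacle I foresee is the calibration of the weights $a_1,a_2$: in the critical case $n=q$ the weight-derivative $2a_1 t^{-1}$ and the intrinsic decay term $2\alpha_1^0 t^{-1}$ are of the same order, and the specific value $(n-m)/(2q)$ is precisely the one for which the sign condition stated in the lemma emerges; preserving this sharp balance against the sub-leading cross terms from the incomplete triangularisation is the technical heart of the argument.
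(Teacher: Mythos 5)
Your overall strategy---linearise, weaken the coupling by a shear, and use a quadratic Lyapunov function with time-dependent weights activated precisely when $n=q$ or $m=q$---is the same as the paper's, and your calibration of the weights (which is what produces the shifted quantities $\alpha_1^0+\frac{n-m}{2q}\delta_{n,q}$ and $\alpha_2^0+\frac{m-n}{2q}\delta_{m,q}$) is correct. The non-critical stability case and the instability case (where $\max\{n,m\}<q$ forces $a_1=a_2=0$) go through as you describe. However, there is a genuine gap in the critical case, which is the only case in which the $\delta$-corrections are active. With your convention $n>m$, after the shear $w=v+(\eta_m/\omega_m)u$ the first equation still reads $\dot u=t^{-n/q}(\alpha_1^0u+\nu_n w)+\dots$, so the off-diagonal entry $t^{-n/q}\nu_n$ has exactly the same order in $t$ as the diagonal entry $t^{-n/q}\alpha_1^0$; your claim that all residual off-diagonal terms decay strictly faster than the diagonal ones is false for this entry. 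In the non-critical case this is harmless: the cross term $2\nu_n t^{-n/q}uw$ in $\dot{\mathcal L}$ can be split by Cauchy--Schwarz and its $w^2$ part is eventually dominated by $|\omega_m|t^{-m/q}w^2$ because $t^{-n/q}=o(t^{-m/q})$. But when $n=q$ and $a_1=(q-m)/(2q)$, the weighted cross term $2\nu_n t^{2a_1-n/q}uw=2\nu_n t^{-m/q}uw$ carries the same power of $t$ as both diagonal contributions $2\left(\alpha_1^0+\tfrac{n-m}{2q}\right)t^{-m/q}u^2$ and $2\omega_m t^{-m/q}w^2$, and absorbing it requires $\nu_n^2<4\left(\alpha_1^0+\tfrac{n-m}{2q}\right)\omega_m$, which the hypotheses do not provide (take both bracketed quantities equal to $-1/10$ and $\nu_n=10$: the leading quadratic form in $\dot{\mathcal L}$ is indefinite).

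The paper avoids this by building the cross term into the Lyapunov function itself: $L=C_1y_1^2+t^{(m-n)/q}y_2^2+C_2y_1y_2$ as in \eqref{LFunc}, with $C_2$ chosen so that the coefficient of $y_1y_2$ in $\dot L$ vanishes identically and $C_1$ then chosen via \eqref{C1C2} to make the remaining diagonal form definite; no smallness of $\nu_n$ or $\eta_m$ is required. Alternatively, you could complete your triangularisation with a second, time-dependent shear $u\mapsto u+c(t)w$ with $c(t)=\mathcal O(t^{-(n-m)/q})$ to remove the $\nu_n$ entry, but that step (and the control of $\dot c$) is missing from your argument as written. A smaller omission: in the critical case $\mathcal L$ is equivalent to $|{\bf y}|^2$ only up to the growing factor $t^{2a_1}$, so concluding stability of $(u,v)$ from the decay of $\mathcal L$ requires integrating $\dot{\mathcal L}\le -c\,t^{-m/q}\mathcal L$ explicitly to beat the growth of the weight, as the paper does.
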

Let us now show that similar dynamics occurs in the truncated system \eqref{trsys}. Define
\begin{gather*}
\beta_{n,m,1}=\begin{cases} 
	\Re \alpha_1^0, & \text{if } \quad n=m,\\
	\alpha_1^0+\frac{n-m}{2q}\delta_{n,q} & \text{if } \quad n\neq m,
	\end{cases}
	\quad
\beta_{n,m,2}=\begin{cases} 
	\Re \alpha_2^0, & \text{if } \quad n=m,\\
	\alpha_2^0+\frac{m-n}{2q}\delta_{m,q} & \text{if } \quad n\neq m.
	\end{cases}
\end{gather*}
Then, we have
\begin{lemma}\label{Lem2}
Let assumptions \eqref{asnm} and \eqref{as1} hold. If $\beta_{n,m,1}<0$ and $\beta_{n,m,2}<0$, then for all $N\geq \max\{n,m\}$ system \eqref{trsys} has a particular solution $\rho_\ast(t)$, $\phi_\ast(t)$ such that 
$\rho_\ast(t)=\varrho_\ast+o(1)$, $\phi_\ast(t)=\varphi_\ast+o(1)$ as $t\to\infty$. Moreover, if $\max\{n,m\}<q$, then
\begin{gather*} 
\rho_\ast(t)\sim \varrho_\ast+\sum_{k=1}^\infty t^{-\frac{k}{q}}\rho_k, \quad 
\phi_\ast(t)\sim \varphi_\ast+\sum_{k=1}^\infty t^{-\frac{k}{q}}\phi_k, \quad t\to\infty,
\end{gather*}
where $\rho_k$, $\phi_k$ are some constants.
\end{lemma}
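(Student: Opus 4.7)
The plan is to separate the argument into a formal construction of the asymptotic expansion (valid when $\max\{n,m\}<q$) and a justification that an actual solution of \eqref{trsys} realises the asymptotics. The stability framework used in the proofs of Lemmas~\ref{Lem1} and~\ref{Lem01} provides the Lyapunov function that controls the second step.

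\emph{Formal series.} Assume $\max\{n,m\}<q$ and substitute the ansatz $\rho_\ast(t)\sim\varrho_\ast+\sum_{k\ge 1}\rho_k t^{-k/q}$, $\phi_\ast(t)\sim\varphi_\ast+\sum_{k\ge 1}\phi_k t^{-k/q}$ into \eqref{trsys}. Taylor-expanding each $\Lambda_k$ and $\Omega_k$ around $(\varrho_\ast,\varphi_\ast)$ and using $\Lambda_n(\varrho_\ast,\varphi_\ast)=\Omega_m(\varrho_\ast,\varphi_\ast)=0$, I would equate coefficients order by order in $t^{-1/q}$. The condition $\max\{n,m\}<q$ guarantees that the derivative $d/dt$, which shifts powers by $-q/q$, acts strictly below the leading linearised right-hand sides, so the unknowns $(\rho_k,\phi_k)$ with the maximal index available enter linearly at each order while the remainder is known from the preceding steps. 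Bookkeeping the staggered contributions of the $\Lambda$ and $\Omega$ equations (which first simultaneously constrain a new pair at different orders when $n\ne m$), the solvability reduces at the ``compatibility'' level to a $2\times 2$ linear system whose determinant is, up to a nonzero factor, $\mathcal D_{n,m}$. Hence the sequence $\{(\rho_k,\phi_k)\}_{k\ge 1}$ is uniquely determined.

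\emph{Existence.} Fix a large integer $M$, set $\tilde\rho_M(t)=\varrho_\ast+\sum_{k=1}^M \rho_k t^{-k/q}$, $\tilde\phi_M(t)=\varphi_\ast+\sum_{k=1}^M \phi_k t^{-k/q}$, and write $\rho=\tilde\rho_M+u$, $\phi=\tilde\phi_M+v$. By construction $(\tilde\rho_M,\tilde\phi_M)$ satisfies \eqref{trsys} up to a residual of size $O(t^{-\gamma(M)})$ with $\gamma(M)\to\infty$ as $M\to\infty$. The resulting system for $(u,v)$ has linear part $\mathbf A(\varrho_\ast,\varphi_\ast,t)$ plus corrections asymptotically smaller than $\mathbf A$, together with terms quadratic in $(u,v)$ and the small forcing. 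The hypotheses $\beta_{n,m,1}<0$, $\beta_{n,m,2}<0$ are exactly those of Lemmas~\ref{Lem1} and~\ref{Lem01}, so their proofs supply a positive-definite Lyapunov function $V(u,v,t)$ whose time-derivative along the linearised flow is negative-definite (after multiplication by an appropriate power of $t$). Combining $V$ with the forcing via a Gronwall-type differential inequality, or equivalently a contraction-mapping iteration on $[t_0,\infty)$ in a weighted sup-norm, produces a solution with $u(t),v(t)=O(t^{-M/q})$. This yields the particular solution of the lemma, and letting $M\to\infty$ while invoking uniqueness of the formal series upgrades the estimate to the full asymptotic expansion stated in the moreover-part.

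\emph{Case $\max\{n,m\}=q$ and main obstacle.} When $\max\{n,m\}=q$ the drift $d/dt$ becomes resonant with the leading linearised right-hand side, so a power series in $t^{-1/q}$ need not exist; only the weaker conclusion $\rho_\ast(t)=\varrho_\ast+o(1)$, $\phi_\ast(t)=\varphi_\ast+o(1)$ is asserted. This follows directly from the asymptotic stability of $(\varrho_\ast,\varphi_\ast)$ in \eqref{limsys}: any trajectory of \eqref{trsys} starting in a sufficiently small sublevel set of the Lyapunov function $V$ from Lemmas~\ref{Lem1}, \ref{Lem01}, for $t_0$ large, stays there and converges to $(\varrho_\ast,\varphi_\ast)$. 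The technical step I expect to be the main obstacle is the bookkeeping in the formal-series construction when $n\ne m$: one must identify exactly at which power of $t^{-1/q}$ a given pair $(\rho_k,\phi_k)$ is first constrained by both the $\Lambda$ and $\Omega$ equations, verify that the relevant $2\times 2$ matrix is nonsingular precisely because $\mathcal D_{n,m}\ne 0$, and propagate the induction through the nonlinear terms without disturbing the hierarchy of orders.
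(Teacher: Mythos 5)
Your treatment of the case $\max\{n,m\}<q$ matches the paper's: construct the formal power series by solving, at each order, a linear system with matrix $-{\bf A}(\varrho_\ast,\varphi_\ast,1)$ (nonsingular because $\mathcal D_{n,m}\neq 0$), then trap an actual solution near a partial sum $\Xi_M,Z_M$ using the time-dependent quadratic Lyapunov function $L(y_1,y_2,t)=C_1y_1^2+t^{(m-n)/q}y_2^2+C_2y_1y_2$ built in the proof of Lemma~\ref{Lem01} (with $\alpha_j^0$ replaced by $\beta_{n,m,j}$). That part of your plan is sound and is essentially the paper's argument.

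The genuine gap is your handling of $\max\{n,m\}=q$. You claim the $o(1)$ conclusion ``follows directly from the asymptotic stability of $(\varrho_\ast,\varphi_\ast)$ in \eqref{limsys}.'' But the particular solution must be found in the \emph{truncated} system \eqref{trsys}, not the limiting system, and $(\varrho_\ast,\varphi_\ast)$ is not an equilibrium of \eqref{trsys}: the higher-order terms $t^{-k/q}\Lambda_k(\varrho_\ast,\varphi_\ast)$, $t^{-k/q}\Omega_k(\varrho_\ast,\varphi_\ast)$ with $k>n$ (resp.\ $k>m$) act as a nonvanishing forcing of size $\mathcal O(t^{-(n+1)/q})$, while the dissipation supplied by the Lyapunov function is only of order $t^{-n/q}|{\bf y}|^2$ with the weight $t^{(m-n)/q}$ distorting the level sets. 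A direct stability estimate therefore does not confine trajectories to a neighbourhood shrinking to zero unless one first subtracts enough terms of an approximate solution to make the residual small relative to the dissipation (the paper needs $M\geq m-n-1$ even in the subcritical case, and $M>(q-n)/2$ when $m=q$). When $m=q$ the required approximate solution cannot be a pure power series: the equation for the coefficient $\zeta_k$ becomes a genuine ODE in $\tau=\log t$, namely $\zeta_k'-(\alpha_2^0+k/q)\zeta_k=\tilde{\mathfrak G}_k$, whose resonant order $k=q|\alpha_2^0|$ forces polynomial-in-$\log t$ coefficients. The paper constructs this log-corrected expansion, estimates its residual, and only then runs the Lyapunov argument in the shifted variables $y_i=\hat\Xi_M+t^{-\epsilon}z_i$ with $0<\epsilon<\min\{|\beta_{n,q,2}|,1/q\}$. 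Without that construction your argument for the critical case does not close; you should either reproduce it or at least explain how the forcing terms are absorbed.
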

Thus, a stable equilibrium in the limiting system corresponds to a phase-locking mode with a steady-state amplitude in the truncated system. Let us finally show that such dynamics is preserved in the full system.
\begin{theorem}\label{Th2}
Let system \eqref{PS} satisfy \eqref{rc}, \eqref{fgS}, and assumptions \eqref{asnm}, \eqref{as1} hold. 
\begin{itemize}
\item
If $\beta_{n,m,1}<0$ and $\beta_{n,m,2}<0$, then for all $\varepsilon>0$ there exist $\delta_\varepsilon\in (0,\varepsilon)$ and $t_\varepsilon> 0$ such that for all $t_s\geq t_\varepsilon$ any solution $\varrho(t)$, $\varphi(t)$ of system \eqref{PS} with initial data $|\varrho(t_s)-\rho_\ast(t_s)|+|\varphi(t_s)-\kappa S(t_s)/ \varkappa -\phi_\ast(t_s)|\leq \delta_\varepsilon$ satisfies the inequality
\begin{gather}\label{ineqrvarphi}
|\varrho(t)-\rho_\ast(t)|+\left|\varphi(t)-\frac{\kappa}{\varkappa}S(t)-\phi_\ast(t)\right|<\varepsilon
\end{gather}
for all $t>t_s$.
\item If $\beta_{n,m,1}>0$, $\beta_{n,m,2}>0$ and $\max\{n,m\}<q$, then there are $\varepsilon>0$ and $t_\ast>0$ such that for all $\delta\in (0,\varepsilon)$ the solution $\varrho(t)$, $\varphi(t)$ of system \eqref{PS} with initial data $|\varrho(t_s)-\varrho_\ast|+|\varphi(t_s)-\kappa S(t_s)/ \varkappa -\varphi_\ast|\leq \delta$ at some $t_s\geq t_\ast$ satisfies the inequality
\begin{gather}\label{ineqrvarphi2}
|\varrho(t_e)-\varrho_\ast|+\left|\varphi(t_e)-\frac{\kappa}{\varkappa}S(t_e)-\varphi_\ast\right|\geq \varepsilon
\end{gather}
at some $t_e>t_s$.
\end{itemize}
\end{theorem}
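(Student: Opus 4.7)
The plan is to apply Theorem~\ref{Th1} with $N\geq\max\{n,m\}$ chosen so large that the remainder rate $t^{-(N+1)/q}$ is dominated by the principal rate $t^{-\max\{n,m\}/q}$, and then carry out a Lyapunov analysis of system \eqref{rpsi} in a neighbourhood of a suitable reference point. Because the near-identity transformations \eqref{ch1}--\eqref{ch2} are reversible with $|\tilde U_N|,|\tilde V_N|\leq\epsilon$, any stability or instability estimate obtained in the $(r,\psi)$-coordinates translates back into a bound of the form \eqref{ineqrvarphi} or \eqref{ineqrvarphi2} by choosing $\epsilon$ small.

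For the stability part I would take $(\rho_\ast(t),\phi_\ast(t))$ from Lemma~\ref{Lem2} and set $u=r-\rho_\ast(t)$, $v=\psi-\phi_\ast(t)$, $w=(u,v)^\top$. The shifted system has principal linear part ${\bf A}(\varrho_\ast,\varphi_\ast,t)$ up to $o(t^{-\max\{n,m\}/q})$ corrections, a smooth nonlinearity of order $|w|^2$, and a nonautonomous residual $\mathcal O(t^{-(N+1)/q})$. When $n=m$ the Jacobian is $t^{-n/q}A_0$ with $A_0$ Hurwitz, and a time-independent quadratic form $V=\langle Pw,w\rangle$ with $A_0^\top P+PA_0=-I$ yields $\dot V\leq -c\,t^{-n/q}|w|^2+\mathcal O(t^{-(N+1)/q})+\mathcal O(|w|^3 t^{-n/q})$; since $\int^{\infty}t^{-n/q}\,dt=\infty$ whenever $n\leq q$, a standard comparison argument gives invariance of a small sublevel set of $V$ and hence \eqref{ineqrvarphi}. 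When $n\neq m$ I would use a time-weighted form such as $V=a u^2+b\, t^{(n-m)/q}v^2$ (with the exponent sign dictated by whether $n\lessgtr m$), chosen so that the effective contraction exponents coincide with $\beta_{n,m,1},\beta_{n,m,2}$; the correction $\delta_{n,q}(n-m)/(2q)$ is exactly the drift produced by differentiating that weight, and it only contributes at the boundary exponent $n=q$. Shrinking the starting radius $\delta_\varepsilon$ and choosing $N$ large enough to swallow the averaging residual closes the argument.

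For the instability part I shift around the fixed point $(\varrho_\ast,\varphi_\ast)$ itself and build a Chetaev function of the same structure but with the opposite sign convention. Under $\beta_{n,m,j}>0$ both eigenvalue branches have strictly positive leading coefficients, so $-A_0$ is Hurwitz and $A_0^\top P+PA_0=I$ admits a positive definite solution $P$ for which $W=\langle Pw,w\rangle$ satisfies $\dot W\geq c\,t^{-\max\{n,m\}/q}W$ up to remainders (the anisotropic case again calls for the weighted form). The assumption $\max\{n,m\}<q$ is precisely what makes $\int^{\infty}t^{-\max\{n,m\}/q}\,dt=\infty$, so a Gronwall-type integration forces $W(t)$ to grow without bound until the trajectory exits any preassigned neighbourhood of the origin, which is \eqref{ineqrvarphi2} after inverting \eqref{ch1}--\eqref{ch2}.

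The main obstacle is the anisotropic case $n\neq m$: a naive quadratic Lyapunov function sees only the slowest exponent and cannot register the correct effective rates $\beta_{n,m,j}$. One must insert the time weight $t^{\pm(n-m)/q}$ that rebalances the two directions, and then verify that the drift $\delta_{n,q}(n-m)/(2q)$ generated by differentiating that weight is compatible with the required signs. Once the correct $V$ and $W$ are identified, the remainder of the proof is routine bookkeeping between the dissipation/expansion rate $t^{-\max\{n,m\}/q}$, the averaging remainder $t^{-(N+1)/q}$, and the quadratic nonlinearity.
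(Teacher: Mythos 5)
Your proposal follows essentially the same route as the paper: pass to the averaged system via Theorem~\ref{Th1} with $N$ large enough that the residual $\mathcal O(t^{-(N+1)/q})$ is dominated, shift around $(\rho_\ast(t),\phi_\ast(t))$ (resp.\ $(\varrho_\ast,\varphi_\ast)$), and use a time-weighted quadratic Lyapunov/Chetaev function whose weight $t^{(m-n)/q}$ produces exactly the corrections $\beta_{n,m,j}$, with the instability conclusion obtained by Gronwall integration under $\max\{n,m\}<q$. The only substantive difference is that the paper's function \eqref{LFunc} carries a cross term $C_2 y_1y_2$ chosen via \eqref{C1C2} to cancel the off-diagonal coupling $2C_1\nu_n+2\eta_m$, which a purely diagonal ansatz $au^2+bt^{(m-n)/q}v^2$ cannot always absorb.
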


Consider now the case when instead of \eqref{as1} the following assumption holds:
\begin{gather}\label{as2}
	\begin{split}
\exists\, \varrho_\ast\in(0,\mathcal R): \quad & \Lambda_n(\varrho_\ast,\psi)\equiv 0, \quad  \ell_n(\psi):= \partial_\rho\Lambda_n(\varrho_\ast,\psi)\not\equiv 0,\\ & \Omega_n(\varrho_\ast,\psi)\neq 0 \quad \forall\,\psi\in\mathbb R.
	\end{split}
\end{gather}
In this case, stable solutions with $\varrho(t)\approx \varrho_\ast$ may appear in the phase drift regime when the phase shift tends to infinity. In particular, we have 
\begin{theorem}\label{Th3}
Let system \eqref{PS} satisfy \eqref{rc}, \eqref{fgS}, and assumptions \eqref{asnm}, \eqref{as2} hold. 
\begin{itemize}
\item
If $\ell_n(\psi)<0$, then for all $\varepsilon>0$ there exist $\delta_\varepsilon\in(0,\varepsilon)$ and $t_\varepsilon > 0$ such that for all $t_s\geq t_\varepsilon$ any solution $\varrho(t)$, $\varphi(t)$ of system \eqref{PS} with initial data $|\varrho(t_s)-\varrho_\ast|\leq \delta_\varepsilon$, $\varphi(t_s)\in\mathbb R$ satisfies the inequality
$
|\varrho(t)-\varrho_\ast|<\varepsilon
$
for all $t>t_s$. Moreover, $\varphi(t)-\kappa S(t)/\varkappa\to \infty$ as $t\to\infty$.
\item If $\ell_n(\psi)>0$, then there are $\varepsilon>0$ and $t_\ast>0$ such that for all $\delta\in (0,\varepsilon)$ the solution $\varrho(t)$, $\varphi(t)$ of system \eqref{PS} with initial data $|\varrho(t_s)-\varrho_\ast|\leq \delta$, $\varphi(t_s)\in\mathbb R$ at some $t_s\geq t_\ast$ satisfies the inequality
$ |\varrho(t_e)-\varrho_\ast|\geq \varepsilon $ at some $t_e>t_s$.
\end{itemize}
\end{theorem}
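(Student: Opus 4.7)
The plan is to apply Theorem~\ref{Th1} to reduce system~\eqref{PS} to the normal form~\eqref{rpsi}, and then to control the deviation $R:=r-\varrho_\ast$ by a Lyapunov function depending only on $R$. The key observation under~\eqref{as2} is that the leading amplitude equation reads
\begin{equation*}
\frac{dR}{dt}=t^{-n/q}R\,\ell_n(\psi)+O(R^2\,t^{-n/q})+O(t^{-(n+1)/q}),
\end{equation*}
and since $\ell_n$ is continuous and $2\pi$-periodic in $\psi$, the pointwise sign hypothesis $\ell_n(\psi)<0$ (resp.~$>0$) upgrades to a uniform bound $\ell_n(\psi)\leq-\gamma<0$ (resp.~$\geq\gamma>0$) on all of $\mathbb{R}$. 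Thus the stability of the amplitude is determined by this single sign, irrespective of how the phase evolves. I first apply Theorem~\ref{Th1} with some $N\geq n+2$ and $\epsilon<\varepsilon/3$; the near-identity changes of variables~\eqref{ch1}--\eqref{ch2} perturb $\varrho$ and $\varphi-\kappa S/\varkappa$ by at most $\epsilon$, so estimates on $(r,\psi)$ translate directly to estimates on the original variables.

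For the stable case, set $W(R):=R^2$; using the Taylor expansion $\Lambda_n(\varrho_\ast+R,\psi)=R\,\ell_n(\psi)+R^2 h_n(R,\psi)$ and the remainder bounds from Theorem~\ref{Th1}, differentiating along solutions of~\eqref{rpsi} gives
\begin{equation*}
\frac{dW}{dt}\leq -2\gamma\,t^{-n/q}W+C\,t^{-n/q}|R|^3+C|R|\,t^{-(n+1)/q}+|R|\,O(t^{-(N+1)/q}).
\end{equation*}
Absorbing the cubic piece for $|R|$ small and Young-splitting $|R|\,t^{-(n+1)/q}\leq\tfrac{\gamma}{2}t^{-n/q}R^2+C'\,t^{-(n+2)/q}$, one reaches
\begin{equation*}
\frac{dW}{dt}\leq-\gamma\,t^{-n/q}W+C_1\,t^{-(n+2)/q}
\end{equation*}
on a small ball $\{W\leq W_0\}$, which a direct comparison argument shows is forward-invariant for $t\geq t_\varepsilon$ large enough; this yields the amplitude estimate. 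For the phase divergence, the $\psi$-equation in~\eqref{rpsi} has leading order $t^{-m/q}\Omega_m(\varrho_\ast+R,\psi)$; condition~\eqref{as2} combined with~\eqref{asnm} (which forces $m\leq n$) and continuity/$2\pi$-periodicity ensures this term is uniformly bounded below in magnitude with a fixed sign for $|R|$ small. Since $\int^{\,t}s^{-m/q}\,ds$ diverges, $\psi(t)$ tends monotonically to $+\infty$ or $-\infty$, whence $\varphi(t)-\kappa S(t)/\varkappa$ diverges in the sense claimed.

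For the unstable case, the sign flip yields $dW/dt\geq\gamma\,t^{-n/q}W-C_1\,t^{-(n+2)/q}$; a Chetaev-type argument on the set $\{W\geq\eta\,t^{-2/q}\}$ with $\eta\gg C_1/\gamma$ shows trajectories entering this set grow indefinitely in the integrated time $\int t^{-n/q}\,ds$, while under the hypothesis $\max\{n,m\}<q$ the forcing $O(t^{-(n+1)/q})$ expels any initially-small $|R|$ from the complementary shell, forcing exit from the $\varepsilon$-ball in finite time. The principal obstacle I anticipate is the delicate balance in the stable step: the forcing of order $t^{-(n+1)/q}$ is borderline with respect to the dissipation $t^{-n/q}R$, and both vanish as $t\to\infty$, so only a carefully chosen Young's split preserves the dissipative structure of the Lyapunov inequality. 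The saving grace is the uniform sign of $\ell_n(\psi)$ across $\psi\in\mathbb{R}$, which allows the purely amplitude-based Lyapunov function $W(R)=R^2$ to succeed even as the phase drifts unboundedly.
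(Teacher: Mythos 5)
Your treatment of the stable case and of the phase divergence is essentially the paper's own argument: reduce to \eqref{rpsi} via Theorem~\ref{Th1}, use periodicity and continuity to upgrade the pointwise sign of $\ell_n$ to a uniform bound $|\ell_n(\psi)|\geq\ell_\ast>0$, and run a scalar comparison on the amplitude deviation alone (the paper works with $|y|$ where you work with $W=R^2$, but the balance between the dissipation $t^{-n/q}|R|$ and the inhomogeneous forcing $\mathcal O(t^{-(n+1)/q})$ is handled in the same way, producing an attracting annulus $\delta_\varepsilon\leq |R|\leq\varepsilon$ for $t\geq t_\varepsilon$). That part is fine.

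The unstable case, as you describe it, has a genuine flaw. First, $\max\{n,m\}<q$ is a hypothesis of the second bullet of Theorem~\ref{Th2}, not of Theorem~\ref{Th3}; you may not invoke it here. Second, and more importantly, your claim that the forcing of order $\mathcal O(t^{-(n+1)/q})$ ``expels any initially-small $|R|$ from the complementary shell'' $\{W<\eta t^{-2/q}\}$ is unjustified and in general false: that term is only an upper bound on $\widehat\Lambda_N(\varrho_\ast,\psi,t)$ plus remainders, and nothing in \eqref{as2} prevents it from vanishing identically (e.g.\ if $\Lambda_k(\varrho_\ast,\cdot)\equiv0$ for all $k$ and the remainder is favorable, $R\equiv0$ can be an exact solution of the truncated dynamics, which no forcing ever expels). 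The repair is to notice that the instability assertion does not require every solution with $|R(t_s)|\leq\delta$ to escape, only one per $\delta$: take the solution with $|R(t_s)|=\delta$, on the annulus $\delta\leq|R|\leq\varepsilon$ your differential inequality gives $d|R|/dt\geq(\ell_\ast/4)\,t^{-n/q}|R|$ for $t\geq t_\ast(\delta)$, and since $n\leq q$ the integral $\int^t\tau^{-n/q}\,d\tau$ diverges, so $|R|$ reaches $\varepsilon$ at some finite $t_e$. This is exactly the paper's route and needs no condition relating $n,m$ to $q$.
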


\section{Change of variables}
\label{sec3}
Substituting \eqref{ch1} into \eqref{PS} yields 
\begin{gather}\label{RP}
\frac{dR}{dt}=F(R,\Psi,S(t),t), \quad \frac{d\Psi}{dt}=G(R,\Psi,S(t),t),
\end{gather}
where
\begin{gather*}
\begin{split}
&F(R,\Psi,S,t)\equiv f\left(R,\frac{\kappa}{\varkappa}S +\Psi,S,t\right), \\
&G(R,\Psi,S,t)\equiv g\left(R,\frac{\kappa}{\varkappa}S +\Psi,S,t\right)-\frac{\kappa}{\varkappa}S'(t).
\end{split}
\end{gather*}
It follows easily that 
\begin{gather}\label{FGas}
\begin{split}
F(R,\Psi,S,t)&\sim \sum_{k=1}^\infty t^{-\frac{k}{q}}F_k(R,\Psi,S), \\ 
G(R,\Psi,S,t)&\sim \sum_{k=1}^\infty t^{-\frac{k}{q}}G_k(R,\Psi,S), \quad 
t\to\infty,
\end{split}
\end{gather}
with $F_k(R,\Psi,S)\equiv f_k(R,\kappa S/\varkappa+\Psi,S)$ and $G_k(R,\Psi,S)\equiv g_k(R,\kappa S/\varkappa+\Psi,S)-\kappa s_k/\varkappa$. 
System \eqref{RP} is asymptotically autonomous with the limiting system 
\begin{gather*}
\frac{d\hat R}{dt}=0, \quad \frac{d\hat\Psi}{dt}=0, \quad \frac{d\hat S}{dt}=s_0.
\end{gather*}
In this case, the function $S(t)$ can be referred as a fast variable as $t\to\infty$ and system \eqref{RP} can be simplified by averaging the equations with respect to $S$ (see, for example,~\cite{BM61}). Consider the following near-identity transformation:
\begin{gather*}
\begin{split}
	U_N(R,\Psi,S,t)&=R+\sum_{k=1}^N t^{-\frac{k}{q}} u_k(R,\Psi,S), \\
	V_N(R,\Psi,S,t)&=\Psi+\sum_{k=1}^N t^{-\frac{k}{q}} v_k(R,\Psi,S)
\end{split}
\end{gather*}
for all $R\in (0,\mathcal R]$, $(\Psi,S)\in\mathbb R^2$ and $t>0$ with some integer $N\geq 1$. The coefficients $u_k(R,\Psi,S)$ and $v_k(R,\Psi,S)$ are assumed to be periodic in $\Psi$ and $S$. These coefficients are sought in such a way that the system, written in the new variables 
\begin{gather}\label{chrp}
r(t)\equiv U_N(R(t),\Psi(t),S(t),t), \quad  
\psi(t)\equiv V_N(R(t),\Psi(t),S(t),t),
\end{gather}
takes the form \eqref{rpsi}. From \eqref{fgS} and \eqref{FGas} it follows that 
\begin{gather}\label{duv}
\begin{split}
\frac{d}{dt}\begin{pmatrix} U_N  \\ V_N \end{pmatrix}:= &\left(\frac{dR}{dt}\partial_R+\frac{d\Psi}{dt}\partial_\Psi+\frac{dS}{dt}\partial_S+\partial_t\right)\begin{pmatrix} U_N  \\ V_N \end{pmatrix}\\
\sim &
\sum_{k=1}^\infty t^{-\frac{k}{q}} \left\{\begin{pmatrix} F_k \\ G_k \end{pmatrix}+s_0\partial_S \begin{pmatrix} u_k \\ v_k \end{pmatrix}-\frac{k-q}{q} \begin{pmatrix} u_{k-q} \\ v_{k-q} \end{pmatrix} \right\} \\ &
 +\sum_{k=2}^\infty t^{-\frac{k}{q}} \sum_{j=1}^{k-1}( F_j\partial_R + G_j\partial_\Psi+s_{j} \partial_S)\begin{pmatrix} u_{k-j} \\ v_{k-j} \end{pmatrix}
\end{split}
\end{gather}
as $t\to\infty$, where it is assumed that $u_k(R,\Psi,S)\equiv v_k(R,\Psi,S)\equiv 0$ for $k\leq 0$ and $k>N$. Comparing the terms of the same powers of $t^{-1/q}$ in \eqref{rpsi} and \eqref{duv}, we get the following system:
\begin{gather}\label{ukvk}
s_0\partial_S \begin{pmatrix} u_k \\ v_k \end{pmatrix}=\begin{pmatrix} \Lambda_k(R,\Psi)-F_k(R,\Psi,S) +\tilde F_k(R,\Psi,S)\\ \Omega_k(R,\Psi)-G_k(R,\Psi,S)+\tilde G_k(R,\Psi,S) \end{pmatrix}, \quad k=1,\dots, N,
\end{gather} 
where the functions $\tilde F_k(R,\Psi,S)$ and $\tilde G_k(R,\Psi,S)$ are expressed through $\{u_{j}, v_{j},\Lambda_{j}, \Omega_{j}\}_{j=1}^{k-1}$. In particular,
\begin{align*}
	\begin{pmatrix} \tilde F_1 \\ \tilde G_1 \end{pmatrix} \equiv  
		& \begin{pmatrix} 0\\ 0 \end{pmatrix},
		\\
	\begin{pmatrix}\tilde F_2 \\ \tilde G_2\end{pmatrix} \equiv 
		& -(F_1\partial_R + G_1\partial_\Psi +s_{1}\partial_S)\begin{pmatrix} u_{1} \\ v_{1} \end{pmatrix} + \frac{2-q}{q} \begin{pmatrix} u_{2-q} \\ v_{2-q} \end{pmatrix} + (u_1\partial_R+v_1\partial_\Psi) \begin{pmatrix} \Lambda_1\\ \Omega_1 \end{pmatrix} 
			\\
	\begin{pmatrix} \tilde F_3 \\ \tilde G_3 \end{pmatrix} \equiv 
		& -\sum_{j=1}^2 ( F_j\partial_R + G_j\partial_\Psi+s_{j}\partial_S)\begin{pmatrix} u_{3-j} \\ v_{3-j} \end{pmatrix} + \frac{3-q}{q} \begin{pmatrix} u_{3-q} \\ v_{3-q} \end{pmatrix}
		\\
		& +\sum_{i+j=3}(u_i\partial_R+v_i\partial_\Psi) \begin{pmatrix} \Lambda_j\\ \Omega_j \end{pmatrix} + \frac{1}{2}\left(u_1^2\partial_R^2+2u_1v_1 \partial_R\partial_\Psi+v_1^2 \partial_\Psi^2\right)\begin{pmatrix} \Lambda_1\\ \Omega_1 \end{pmatrix}.
\end{align*}
Let us take
\begin{align*}
\Lambda_k(R,\Psi)&\equiv \langle F_k(R,\Psi,S)-\tilde F_k(R,\Psi,S)\rangle_{\varkappa S},\\
\Omega_k(R,\Psi)&\equiv \langle G_k(R,\Psi,S)-\tilde G_k(R,\Psi,S)\rangle_{\varkappa S}.
\end{align*}
Then, the right-hand side of \eqref{ukvk} is periodic in $S$ and has zero mean.  It follows that system \eqref{ukvk} is solvable in the class of functions that are $2\pi\varkappa$-periodic in $S$. We see that $u_k(R,\Psi,S)$, $v_k(R,\Psi,S)$, $\Lambda_k(R,\Psi)$, $\Omega_k(R,\Psi)$ are $2\pi$-periodic in $\Psi$. Thus, for all $\epsilon\in (0,\mathcal R/2)$ there exists $t_0\geq 1$ such that
\begin{gather*}
|U_N(R,\Psi,S,t)-R|< \epsilon, \quad 
|V_N(R,\Psi,S,t)-\Psi|< \epsilon, \quad 
|D(R,\Psi,S,t)-1|< \epsilon
\end{gather*}
for all $R\in(0,\mathcal R]$, $(\Psi,S)\in\mathbb R^2$ and $t\geq t_0$, where
\begin{gather*}
D(R,\Psi,S,t):={\hbox{\rm det}} \begin{vmatrix} \partial_R U_N(R,\Psi,S,t) & \partial_\Psi U_N(R,\Psi,S,t) \\ \partial_R V_N(R,\Psi,S,t) & \partial_\Psi V_N(R,\Psi,S,t) \end{vmatrix}.
\end{gather*}
 Hence, \eqref{chrp} is invertible and there exists the reverse transformation $R= u(r,\psi,t)$, $\Psi=v(r,\psi,t)$  such that $0<u(r,\psi,t)<\mathcal R$ for all $r\in [\epsilon,\mathcal R-\epsilon]$, $\psi\in\mathbb R$ and $t\geq t_0$.
Define
\begin{align*}
\begin{pmatrix}
\tilde \Lambda_N(r,\psi,S,t)\\
\tilde \Omega_N(r,\psi,S,t)
\end{pmatrix}
\equiv &
\frac{d}{dt}\begin{pmatrix} U_N  \\ V_N \end{pmatrix}\Big|_{R=u(r, \psi, t), \Psi=v(r,\psi,t)}  - \sum_{k=1}^N t^{-\frac{k}{q}} \begin{pmatrix}
\Lambda_k(r,\psi)\\
\Omega_k(r,\psi)
\end{pmatrix}.
\end{align*}
Then, the estimates \eqref{tildest} hold.

Thus, we obtain the proof of Theorem~\ref{Th1} with $\tilde U_N(R,\Psi,t)\equiv  U_N(R,\Psi,S(t),t)-R$, $\tilde V_N(R,\Psi,t)\equiv  V_N(R,\Psi,S(t),t)-\Psi$.

\section{Analysis of the truncated systems}\label{sec4}

\begin{proof}[Proof of Lemma~\ref{Lem1}]
Substituting $\rho(t)=\varrho_\ast+y_1(t)$, $\phi(t)=\varphi_\ast+y_2(t)$ into \eqref{limsys}, we obtain
\begin{gather}\label{ysys}
 \frac{d{\bf y}}{dt}= {\bf A}(\varrho_\ast,\varphi_\ast,t) {\bf y}+{\bf h}({\bf y},t), 
\end{gather}
where ${\bf y}=(y_1,y_2)^T$, ${\bf h}({\bf y},t)\equiv {\bf a}({\bf y},t)-{\bf A}(\varrho_\ast,\varphi_\ast,t) {\bf y}$, and
\begin{gather*}
{\bf a}({\bf y},t)\equiv \begin{pmatrix} t^{-\frac{n}{q}}\Lambda_n(\varrho_\ast+y_1,\varphi_\ast+y_2)\\ t^{-\frac{m}{q}}\Omega_m(\varrho_\ast+y_1,\varphi_\ast+y_2)\end{pmatrix}.
\end{gather*}

Let matrix ${\bf A}(\varrho_\ast,\varphi_\ast,1)$ is non-defective. Then, there exists an invertible matrix ${\bf T}_0$ such that 
\begin{gather}\label{T0}
{\bf T}^{-1}_0{\bf A}(\varrho_\ast,\varphi_\ast,t) {\bf T}_0 = t^{-\frac nq}{\bf D}_0, \quad {\bf D}_0 ={\hbox{\rm diag}}\{\alpha_1^0,\alpha_2^0\}.
\end{gather}
Hence, substituting ${\bf y}(t)={\bf T}_0 {\bf z}(t)$ into \eqref{ysys}, we obtain
\begin{gather}\label{zsys}
t^{ \frac{n}{q}} \frac{d{\bf z}}{d\tau}=  {\bf D}_0 {\bf z}+ {\bf b}_0({\bf z}), \quad {\bf b}_0({\bf z}) \equiv t^{\frac{n}{q}}{\bf T}_0^{-1}{\bf h}({\bf T}_0 {\bf z},t)=\mathcal O(|{\bf z}|^2), \quad |{\bf z}|=\sqrt{|z_1|^2+|z_2|^2}\to 0.
\end{gather}
Note that the matrix ${\bf D}_0$ and the vector ${\bf z}=(z_1,z_2)^T$ may have complex elements.  If $\Re \alpha_{j}^0<0$ for all $j\in\{1,2\}$, consider 
\begin{gather}\label{L0}
L({\bf z})\equiv ({\bf z},\overline{\bf z})\equiv |z_1|^2+|z_2|^2
\end{gather} 
as a Lyapunov function candidate for system \eqref{zsys}, where $\overline{\bf z}=(\overline{z}_1, \overline{z}_2)^T$ is the conjugate of a complex vector $\overline{\bf z}$. The derivative of $L({\bf z})$ along the  trajectories of system \eqref{zsys} satisfies the following:
\begin{gather*}
\frac{dL}{dt}\Big|_\eqref{zsys}= t^{-\frac{n}{q}} \left(2\sum_{j=1}^2 \Re \alpha_j^0 |z_j|^2 + \mathcal O(|{\bf z}|^3)\right), \quad |{\bf z}|\to 0.
\end{gather*}
Hence, there exists $\Delta_0>0$ such that 
\begin{gather}\label{Lineq}
\frac{dL}{dt}\Big|_\eqref{zsys} \leq  -\gamma t^{-\frac{n}{q}}L
\end{gather}
for all $|{\bf z}|\leq \Delta_0$ and $t>0$ with $\gamma=\min\{|\Re \alpha_1^0|,|\Re \alpha_2^0|\}$. Therefore, for all $\epsilon\in (0,\Delta_0)$ there exists $\delta\in (0,\epsilon)$ such that any solution ${\bf z}(t)$ of system \eqref{zsys} with initial data $|{\bf z}(t_s)|\leq \delta$, $t_s>0$ cannot exit from the domain $\{|{\bf z}|\leq \epsilon\}$ as $t>t_s$. Moreover, integrating \eqref{Lineq}, we get
\begin{equation*}
\begin{array}{rlrl}
|{\bf z}(t)|&\leq |{\bf z}(t_s)| \exp \left(-\frac{\gamma q}{n-q} (t^{1-\frac{n}{q}}-t_s^{1-\frac{n}{q}})\right), & \text{if} &  n<q,\\
 |{\bf z}(t)|&\leq |{\bf z}(t_s)|  \left(\frac{t}{t_0}\right)^{-\gamma}, & \text{if} &    n =q,
\end{array}
\end{equation*}
as $t>t_s$. Since $|{\bf y}|\leq B_0 |{\bf z}|$ with some $B_0={\hbox{\rm const}}>0$, we see that the solution ${\bf y}(t)\equiv 0$ is asymptotically stable in system \eqref{ysys}. 
If $\Re \alpha_{1}^0>0$, consider $J({\bf z})\equiv  |z_1|^2+({\hbox{\rm sgn} \Re \alpha_2^0})|z_2|^2$ as a Chetaev function candidate for system \eqref{zsys}. It can easily be checked that 
\begin{gather*} 
\frac{dJ}{dt}\Big|_\eqref{zsys} = t^{-\frac{n}{q}} \left( 2 \Re \alpha_1^0 |  z_1|^2+2 |\Re \alpha_2^0| |  z_2|^2 + \mathcal O(|  {\bf z}|^3)\right), \quad | {\bf z}|\to 0.
\end{gather*}
Hence, there exists $\Delta_1>0$ such that 
\begin{gather*}
\frac{dJ}{dt}\Big|_\eqref{zsys} \geq \gamma t^{-\frac{n}{q}} |{\bf z}|^2\geq 0
\end{gather*}
for all ${\bf z}\in\{|  {\bf z}|\leq \Delta_1: J({\bf z})\geq 0\}$ and $t>0$. It follows that the solution ${\bf z}(t)\equiv 0$ is unstable (see, for example,~\cite[Theorem~4.3]{HKH}).  Since $|{\bf y}|\geq A_0|{\bf z}|$ with some $A_0\in (0,B_0)$, we see that the solution ${\bf y}(t)\equiv 0$ is unstable in system \eqref{ysys}. 

Let matrix ${\bf A}(\varrho_\ast,\varphi_\ast,1)$ is defective. Then, $\alpha_1^0=\alpha_2^0=\alpha_0\in \mathbb R$ and there exists an invertible matrix $\tilde{\bf D}_0$ such that
\begin{gather*}
\tilde {\bf T}^{-1}_0{\bf A}(\varrho_\ast,\varphi_\ast,t) \tilde {\bf T}_0 = t^{-\frac nq}\tilde {\bf D}_0, \quad \tilde{\bf D}_0 =\begin{pmatrix} a_0 & 1 \\ 0 &  \alpha_0\end{pmatrix}.
\end{gather*}
Substituting ${\bf y}(t)=\tilde {\bf T}_0 \tilde {\bf z}(t)$ into \eqref{ysys}, we obtain
\begin{gather}\label{z2sys}
t^{ \frac{n}{q}} \frac{d\tilde {\bf z}}{d\tau}=  \tilde {\bf D_0} \tilde {\bf z}+ \tilde{\bf b}_0(\tilde {\bf z}), \quad 
\tilde{\bf b}_0(\tilde {\bf z}) \equiv t^{\frac{n}{q}}\tilde {\bf D}_0^{-1}{\bf h}(\tilde {\bf D}_0 \tilde {\bf z},t)=\mathcal O(|\tilde {\bf z}|^2), \quad |\tilde {\bf z}|=\sqrt{\tilde z_1^2+\tilde z_2^2}\to 0.
\end{gather}
Consider $\tilde L(\tilde{z}_1,\tilde z_2)\equiv \tilde z_1^2+\tilde z_2^2/a_0^2$ as a Lyapunov function candidate. Then, its derivative satisfies
\begin{gather*} 
\frac{d\tilde L}{dt}\Big|_\eqref{z2sys} = t^{-\frac{n}{q}} \left( 2 \alpha_0  \tilde z_1^2+2 \tilde z_1 \tilde z_2 +  \frac{\tilde z_2^2}{\alpha_0} + \mathcal O(|  \tilde {\bf z}|^3)\right), \quad | \tilde{\bf z}|\to 0
\end{gather*}
for all $t>0$. It can easily be seen that there exists $\Delta_2>0$ such that 
\begin{eqnarray*}
 \frac{d\tilde L}{dt}\Big|_\eqref{z2sys} \leq  -\tilde \gamma t^{-\frac{n}{q}}\tilde L,  & \quad & \text{if} \quad   \alpha_0<0;\\
 \frac{d\tilde L}{dt}\Big|_\eqref{z2sys} \geq  \tilde \gamma t^{-\frac{n}{q}}\tilde L, & \quad & \text{if} \quad  \alpha_0>0
\end{eqnarray*}
for all $|\tilde {\bf z}|\leq \Delta_2$ and $t>0$ with $\tilde \gamma=|\alpha_0|/2$. Hence, the trivial solution of system \eqref{ysys} is asymptotically stable if $\alpha_0<0$ and unstable if $\alpha_0>0$. 

Returning to the variables $(\rho,\phi)$, we obtain the stability or instability of the equilibrium $(\varrho_\ast,\varphi_\ast)$ in system \eqref{limsys}.

\end{proof}

\begin{proof}[Proof of Lemma~\ref{Lem01}]
For definiteness let us assume that $n<m$ (The proof in the case $n>m$ is similar). Then, $\alpha_1^0=\lambda_n$ and $\alpha_2^0=\lambda_n^{-1}\mathcal D_{n,m}$. Define  $\tilde\omega_m=\omega_m+\delta_{m,q}(m-n)/(2q)$, $\tilde \alpha_2^0=\alpha_2^0+\delta_{m,q}(m-n)/(2q)$ and consider 
\begin{gather}\label{LFunc}
L(y_1,y_2,t)=C_1 y_1^2+t^{\frac{m-n}{q}} y_2^2 + C_2 y_1y_2
\end{gather}
with some parameters $C_1$ and $C_2$ as a Lyapunov function candidate for system \eqref{ysys}. The derivative of $L(y_1,y_2,t)$ along the 
trajectories of system \eqref{ysys} is given by 
\begin{align*}
\frac{dL}{dt}\Big|_{\eqref{ysys}}=& \, t^{-\frac{n}{q}}\left\{
2C_1 \lambda_n y_1^2+(2 \tilde\omega_m + C_2 \nu_n)y_2^2 + (2C_1\nu_n+2 \eta_m+C_2 \lambda_n)+\mathcal O(|{\bf y}|^3)+\mathcal O(t^{-\frac{1}{q}})\mathcal O(|{\bf y}|^2)\right\}
\end{align*}
as $|{\bf y}|\to 0$ and $t\to\infty$. We take  
\begin{gather}\label{C1C2}
C_1=
\begin{cases}
\lambda_n \tilde\omega_m, & \text{if} \quad \nu_n=0,\\
\displaystyle \frac{\lambda_n \tilde \alpha_2^0}{2\nu_n^2}, & \text{if} \quad \nu_n\neq 0,
\end{cases}
\quad  
C_2=-\frac{2C_1\nu_n+2\eta_m}{\lambda_n}.
\end{gather}
Then, the following estimate holds:
\begin{gather*}
\frac{dL}{dt}\Big|_{\eqref{ysys}}= 2C_1 \lambda_n t^{-\frac{n}{q}} \left\{y_1^2+(\nu_n^2+\delta_{\nu_n,0})\lambda_n^{-2} y_2^2+\mathcal O(|{\bf y}|^3)+\mathcal O(t^{-\frac{1}{q}})\mathcal O(|{\bf y}|^2)\right\}
\end{gather*}
as $|{\bf y}|\to 0$ and $t\to\infty$. 
It can easily be checked that $C_1>0$ if $\alpha_1^0 \tilde \alpha_2^0>0$. Therefore, there exist $\Delta_1>0$ and $t_1>0$ such that
\begin{equation*}
\begin{array}{cl}
B_- |{\bf y}|^2\leq L(y_1,y_2,t)\leq B_+ t^{\frac{m-n}{q}}|{\bf y}|^2,&\\
\displaystyle \frac{dL}{dt}\Big|_{\eqref{ysys}}\leq -\gamma t^{-\frac{m}{q}} L, & \text{if} \quad \alpha_1^0<0, \quad \tilde \alpha_2^0<0,\\
\displaystyle \frac{dL}{dt}\Big|_{\eqref{ysys}}\geq  \gamma t^{-\frac{m}{q}} L, & \text{if} \quad \alpha_1^0>0, \quad \tilde \alpha_2^0>0
\end{array}
\end{equation*}
for all $|{\bf y}|\leq \Delta_1$ and $t\geq t_1$ with some constants $B_\pm>0$ and $\gamma=C_1|\alpha_1^0|\min\{1,(\nu_n^2+\delta_{\nu_n,0})\lambda_n^{-2}\}/B_+>0$.
It follows that the trivial solution of system \eqref{ysys} is asymptotically stable if $\alpha_1^0<0$ and $\tilde \alpha_2^0<0$. 
If $\alpha_1^0>0$, $\alpha_2^0>0$ and $m<q$, then 
\begin{gather*}
|{\bf y}(t)| \geq B_1 t^{-\frac{m-n}{2q}} \exp \left(\frac{\gamma q}{2(q-m)}t^{1-\frac{m}{q}}\right), \quad t\geq t_1
\end{gather*}
with some constant $B_1>0$. In this case, the solution is unstable. Returning to the variables $(\rho,\phi)$, we obtain the stability or instability of the equilibrium $(\varrho_\ast,\varphi_\ast)$ in system \eqref{limsys}.
\end{proof}

\begin{proof}[Proof of Lemma~\ref{Lem2}]
Consider the case $n\neq m$. The proof in the case $n=m$ is similar. 
For definiteness let us assume that $n<m$. Substituting $\rho(t)=\varrho_\ast+y_1(t)$, $\phi(t)=\varphi_\ast+y_2(t)$ into \eqref{trsys}, we obtain
\begin{gather}\label{ysys2}
 \frac{dy_1}{dt}= \mathcal F(y_1,y_2,t), \quad 
\frac{dy_2}{dt}= \mathcal G(y_1,y_2,t), 
\end{gather}
where
\begin{gather*}
\mathcal F(y_1,y_2,t)=\widehat\Lambda_N(\varrho_\ast+y_1,\varphi_\ast+y_2,t), \quad 
\mathcal G(y_1,y_2,t)=\widehat\Omega_N(\varrho_\ast+y_1,\varphi_\ast+y_2,t).
\end{gather*}

{\bf 1}. If $n< m<q$, the asymptotic solution for system \eqref{ysys2} is constructed in the form
\begin{gather}\label{xizeta}
y_1(t)=\sum_{k=1}^\infty t^{-\frac{k}{q}} \xi_k, \quad 
y_2(t)=\sum_{k=1}^\infty t^{-\frac{k}{q}} \zeta_k, \quad t\to\infty,
\end{gather}
where $\xi_k$ and $\zeta_k$ are constant coefficients. Substituting these series into \eqref{ysys} and grouping the terms of the same power of $t$ yield the following chain of equations for the coefficients $\xi_k$ and $\zeta_k$:
\begin{gather}\label{xikzetak}
-{\bf A}(\varrho_\ast,\varphi_\ast,1) \begin{pmatrix}\xi_k \\ \zeta_k \end{pmatrix} = 
\begin{pmatrix} \mathfrak F_k(\xi_1,\zeta_1,\dots,\xi_{k-1},\zeta_{k-1}) \\
\mathfrak G_k(\xi_1,\zeta_1,\dots,\xi_{k-1},\zeta_{k-1}) 
\end{pmatrix}, \quad k\geq 1,
\end{gather}
where the right-hand sides  are certain polynomial functions of $\xi_1,\zeta_1,\dots,\xi_{k-1},\zeta_{k-1}$.
For example, $\mathfrak F_1= \Lambda_{n+1}(\varrho_\ast,\varphi_\ast)$, $\mathfrak G_1= \Omega_{m+1}(\varrho_\ast,\varphi_\ast)$, $\mathfrak F_2= \Lambda_{n+2}(\varrho_\ast,\varphi_\ast)+(\xi_1 \partial_\rho +\zeta_1 \partial_\phi) \Lambda_{n+1}(\varrho_\ast,\varphi_\ast)+(\xi_1^2\partial_\rho^2+2\xi_1\zeta_1 \partial_\rho \partial_\phi+\zeta_1^2\partial_\phi^2)\Lambda_n(\varrho_\ast,\varphi_\ast)/2$, and $\mathfrak G_2= \Omega_{m}(\varrho_\ast,\varphi_\ast)+(\xi_1 \partial_\rho +\zeta_1 \partial_\phi) \Omega_{m+1}(\varrho_\ast,\varphi_\ast)+(\xi_1^2\partial_\rho^2+2\xi_1\zeta_1 \partial_\rho \partial_\phi+\zeta_1^2\partial_\phi^2)\Omega_m(\varrho_\ast,\varphi_\ast)/2$.  Since $\mathcal D_{n,m}\neq 0$, system \eqref{xikzetak} is solvable. To prove the existence of solutions to system \eqref{ysys2} with asymptotic behaviour \eqref{xizeta}, we construct a suitable Lyapunov function. This method was used for similar cases in~\cite{LK15}. Define the functions
\begin{gather*}
\Xi_M(t)\equiv \sum_{k=1}^M t^{-\frac{k}{q}}\xi_k, \quad 
Z_M(t)\equiv \sum_{k=1}^M t^{-\frac{k}{q}}\zeta_k, \quad M\in\mathbb Z_+.
\end{gather*}
Note that
\begin{gather}
\label{R1R2}
\begin{split}
&\mathfrak R_1(t)\equiv \frac{d\Xi_M(t)}{dt}-\mathcal F(\Xi_M(t),Z_M(t),t)=\mathcal O\left(t^{-\frac{n+M+1}{q}}\right), \\
&\mathfrak R_2(t)\equiv\frac{dZ_M(t)}{dt}-\mathcal G(\Xi_M(t),Z_M(t),t)=\mathcal O\left(t^{-\frac{m+M+1}{q}}\right), \quad t\to\infty.
\end{split}
\end{gather}
Substituting $y_1(t)=\Xi_M(t)+t^{-\frac{M}{2q}}z_1(t)$, $y_2(t)=Z_M(t)+t^{-\frac{M}{2q}}z_2(t)$ into \eqref{ysys2}, we obtain 
\begin{gather}\label{z1z2sys}
\frac{dz_1}{dt}=\tilde {\mathcal F}(z_1,z_2,t)-t^{\frac{M}{2q}}\mathfrak R_1(t), \quad 
\frac{dz_2}{dt}=\tilde {\mathcal G}(z_1,z_2,t)-t^{\frac{M}{2q}}\mathfrak R_2(t),
\end{gather}
where 
\begin{align*}
	\tilde {\mathcal F}(z_1,z_2,t)\equiv & t^{\frac{M}{2q}}\left(\mathcal F(\Xi_M(t)+t^{-\frac{M}{2q}}z_1,Z_M(t)+t^{-\frac{M}{2q}}z_2,t)-\mathcal F(\Xi_M(t),Z_M(t),t)\right)+\frac{M}{2q}t^{-1}z_1, \\
	\tilde {\mathcal G}(z_1,z_2,t)\equiv &  t^{\frac{M}{2q}}\left(\mathcal G(\Xi_M(t)+t^{-\frac{M}{2q}}z_1,Z_M(t)+t^{-\frac{M}{2q}}z_2,t)-\mathcal G(\Xi_M(t),Z_M(t),t)\right)+\frac{M}{2q}t^{-1}z_2.
\end{align*}
It can easily be checked that
\begin{align*}
	 & \tilde {\mathcal F}(z_1,z_2,t)=t^{-\frac{n}{q}}\left(\lambda_n z_1+\nu_n z_2+\mathcal O(t^{-\frac{1}{q}})\mathcal O(|{\bf z}|)\right), \\ 
	& \tilde {\mathcal G}(z_1,z_2,t)=t^{-\frac{m}{q}}\left(\eta_m z_1+\omega_m z_2+\mathcal O(t^{-\frac{1}{q}})\mathcal O(|{\bf z}|)\right) 
\end{align*}
as $|{\bf z}|=\sqrt{z_1^2+z_2^2}\to 0$ and $t\to\infty$. Consider $V(z_1,z_2,t)\equiv L(z_1,z_2,t)$ as a Lyapunov function candidate for system \eqref{z1z2sys}, where
$L(z_1,z_2,t)$ is defined by \eqref{LFunc} with 
\begin{gather*}
C_1=
\begin{cases}
\displaystyle \beta_{n,m,1} \omega_m, & \text{if} \quad \nu_n=0,\\
\displaystyle \frac{\beta_{n,m,1} \beta_{n,m,2}}{2\nu_n^2}, & \text{if} \quad \nu_n\neq 0,
\end{cases}
\quad  
C_2=-\frac{2C_1\nu_n+2\eta_m}{\beta_{n,m,1}}.
\end{gather*}
Note that $C_1>0$.
The derivative of $V(z_1,z_2,t)$ along the trajectories of system \eqref{z1z2sys} satisfies
\begin{gather*}
\frac{dV}{dt}\Big|_{\eqref{z1z2sys}}= 2C_1 \beta_{n,m,1} t^{-\frac{n}{q}} \left\{z_1^2+(\nu_n^2+\delta_{\nu_n,0})\beta_{n,m,1}^{-2}z_2^2 +\mathcal O(t^{-\frac{1}{q}})\mathcal O(|{\bf z}|^2)\right\}+\mathcal O(t^{-\frac{2n+M+2}{2q}})\mathcal O(|{\bf z}|)
\end{gather*}
as $|{\bf z}|\to 0$ and $t\to\infty$. It follows that there exist $\Delta_1>0$ and $t_1\geq t_0$ such that
\begin{gather*} 
B_- |{\bf z}|^2\leq V(z_1,z_2,t)\leq B_+ t^{\frac{m-n}{q}}|{\bf z}|^2,\quad
\frac{dV}{dt}\Big|_{\eqref{z1z2sys}}\leq t^{-\frac{n}{q}}\left(-\gamma |{\bf z}|^2+ t^{-\frac{M+2}{2q}} B_\ast |{\bf z}|\right)
\end{gather*}
for all $|{\bf z}|\leq \Delta_1$ and $t\geq t_1$ with some constants $B_\pm>0$, $B_\ast>0$ and $\gamma=C_1|\beta_{n,m,1}|\min\{1,(\nu_n^2+\delta_{\nu_n,0})\beta_{n,m,1}^{-2}\}>0$. Taking $M\geq m-n-1$ ensures that for all $\varepsilon\in (0,\Delta_1)$ there exist
\begin{gather*}
\delta_\varepsilon=\frac{2B_\ast }{\gamma}t_\varepsilon^{-\frac{M+2-(m-n)}{2q}}, \quad
t_\varepsilon=\max\left\{1, t_1, \left(\frac{4B_\ast}{\gamma \varepsilon}\sqrt{\frac{B_+}{B_-}}\right)^{\frac{2q}{M+2}}\right\}
\end{gather*}
such that
\begin{gather*}
\frac{dV}{dt}\Big|_{\eqref{z1z2sys}}\leq t^{-\frac{n}{q}}|{\bf z}|^2\left(-\gamma+t_\varepsilon^{-\frac{M+2-(m-n)}{2q}}\frac{B_\ast}{\delta_\varepsilon}\right)\leq -t^{-\frac{n}{q}}\frac{\gamma |{\bf z}|^2}{2} <0
\end{gather*}
for all $\delta_\varepsilon  t^{-\frac{m-n}{2q}}\leq |{\bf z}|\leq \varepsilon$ and $t\geq t_\varepsilon$. Combining this with the inequalities
\begin{gather}\label{Vineq}
\sup_{|{\bf z}|\leq \delta_\varepsilon t^{-\frac{m-n}{2q}}, t\geq t_\varepsilon} V(z_1,z_2,t)\leq B_+ \delta_\varepsilon^2 < B_-\varepsilon^2=\inf_{|{\bf z}|=\varepsilon, t\geq t_\varepsilon } V(z_1,z_2,t)
\end{gather}
we see that any solution $(z_1(t),z_2(t))$ of system \eqref{z1z2sys} with initial data $|{\bf z}(t_s)|\leq \delta_\varepsilon t_s^{-(m-n)/(2q)}$, $t_s\geq t_\varepsilon$ cannot exit from the domain $|{\bf z}|\leq\varepsilon$ as $t> t_s$. Hence, for all $M\geq m-n-1$ there exists a solution of system \eqref{ysys2} such that $y_1(t)=\Xi_M(t)+\mathcal O(t^{-M/(2q)})$, $y_2(t)=Z_M(t)+\mathcal O(t^{-M/(2q)})$ as $t\to\infty$. 

{\bf 2}. If $n<m=q$, the asymptotics is constructed in the following form
\begin{gather*}
y_1(t)=\sum_{k=1}^\infty t^{-\frac{k}{q}} \xi_k(\log t), \quad 
y_2(t)=\sum_{k=1}^\infty t^{-\frac{k}{q}} \zeta_k(\log t), \quad t\to\infty,
\end{gather*}
where $\xi_k(\tau)$ and $\zeta_k(\tau)$ are some polynomials in $\tau$.
Substituting the series into \eqref{ysys}, we get
\begin{gather}\label{xikzetak2}
\begin{split}
-(\lambda_n \xi_k+ \nu_n \zeta_k)=&\,\mathfrak F_k(\xi_1,\zeta_1,\dots,\xi_{k-1},\zeta_{k-1}), \\
 \frac{d\zeta_k}{d\tau}-\left(a_2^0+ \frac{k}{q}\right)\zeta_k=&\,\tilde {\mathfrak G}_k(\xi_1,\zeta_1,\dots,\xi_{k-1},\zeta_{k-1}), \quad k\geq 1,
\end{split}
\end{gather}
where $\tilde {\mathfrak  G}_k=\mathfrak G_k-\lambda_n^{-1}\eta_m \mathfrak F_k$ and $a_2^0=\lambda_n^{-1}\mathcal D_{n,m}$. Since $a_2^0<0$, it follows that system \eqref{xikzetak2} is solvable and
\begin{gather*}
\zeta_k(\tau)\equiv 
	\begin{cases}
	 \displaystyle -\frac{q \tilde{\mathfrak G}_k}{ q \alpha_2^0+k}, &\text{if} \quad  k<q|\alpha_2^0|,\\
	\displaystyle \int\limits_{\log t_0}^{\tau} \tilde{\mathfrak G}_k\,d\theta, &\text{if} \quad  k=q|\alpha_2^0|,\\
	\displaystyle \int\limits_{\infty}^{\tau} e^{-(\alpha_2^0+k/q)(\theta-\tau)}\tilde{\mathfrak G}_k\,d\theta, &\text{if} \quad  k>q|\alpha_2^0|.
	\end{cases}
\end{gather*}
Define the functions
\begin{gather*}
\hat \Xi_M(t)\equiv \sum_{k=1}^M t^{-\frac{k}{q}}\xi_k(\log t), \quad 
\hat Z_M(t)\equiv \sum_{k=1}^M t^{-\frac{k}{q}}\zeta_k(\log t), \quad M\in\mathbb Z_+.
\end{gather*}
Then, the following estimates holds:
\begin{align*}
&\hat{\mathfrak R}_1(t)\equiv \frac{d\hat\Xi_M(t)}{dt}-\mathcal F(\hat\Xi_M(t),\hat Z_M(t),t)=\mathcal O\left(t^{-\frac{n+M+1}{q}}\right), \\
&\hat{\mathfrak R}_2(t)\equiv\frac{d\hat Z_M(t)}{dt}-\mathcal G(\hat\Xi_M(t),\hat Z_M(t),t)=\mathcal O\left(t^{-1-\frac{M+1}{q}}\right), \quad t\to\infty.
\end{align*}
Substituting $y_1(t)=\hat \Xi_M(t)+t^{-\epsilon} z_1(t)$, $y_2(t)=\hat Z_M(t)+t^{-\epsilon} z_2(t)$ with some $\epsilon\in(0,1/q)$ into \eqref{ysys2} we obtain
\begin{gather}\label{z1z2sys2}
\frac{dz_1}{dt}=\hat{\mathcal F}(z_1,z_2,t)-t^{ \epsilon} \hat {\mathfrak R}_1(t), \quad 
\frac{dz_2}{dt}=\hat{\mathcal G}(z_1,z_2,t)-t^{ \epsilon}  \hat {\mathfrak R}_2(t),
\end{gather} 
where
\begin{align*}
	\hat {\mathcal F}(z_1,z_2,t)& \equiv  t^{\epsilon} \left(\mathcal F(\hat \Xi_M(t)+t^{-\epsilon} z_1,\hat Z_M(t)+t^{-\epsilon} z_2,t)-\mathcal F(\hat \Xi_M(t),\hat Z_M(t),t)\right)+\epsilon t^{-1} z_1, \\
	 & = t^{-\frac{n}{q}}\left(\beta_{n,q,1} z_1+\nu_n z_2+\mathcal O(t^{-\epsilon})\mathcal O(|{\bf z}|)\right),\\
	\hat {\mathcal G}(z_1,z_2,t)& \equiv    t^{\epsilon} \left( \mathcal G(\hat \Xi_M(t)+t^{-\epsilon}z_1,\hat Z_M(t)+t^{-\epsilon}z_2,t)-\mathcal G(\hat \Xi_M(t),\hat Z_M(t),t)\right)+\epsilon t^{-1} z_2\\
	& =t^{-1}\left(\eta_q z_1+(\omega_q+\epsilon) z_2+\mathcal O(t^{-\epsilon})\mathcal O(|{\bf z}|)\right) 
\end{align*}
as $|{\bf z}|\to 0$ and $t\to\infty$. 
We choose $\epsilon$ small enough such that $0<\epsilon<\min\{|\beta_{n,q,2}|,1/q\}$ and consider $\hat V(z_1,z_2,t)\equiv L(z_1,z_2,t)$ as a Lyapunov function candidate for system \eqref{z1z2sys}, where
$L(z_1,z_2,t)$ is defined by \eqref{LFunc} with 
\begin{gather*}
C_1=
\begin{cases}
\displaystyle \beta_{n,q,1} \left(\omega_q+\frac{q-n}{2q}+\epsilon\right), & \text{if} \quad \nu_n=0,\\
\displaystyle \frac{\beta_{n,q,1}(\beta_{n,q,2}+\epsilon)}{2\nu_n^2}, & \text{if} \quad \nu_n\neq 0,
\end{cases}
\quad  
C_2=-\frac{2C_1\nu_n+2\eta_q}{\beta_{n,q,1}}.
\end{gather*}
It can easily be checked that $C_1>0$.
The derivative of $\hat V(z_1,z_2,t)$ along the trajectories of system \eqref{z1z2sys2} satisfies
\begin{gather*}
\frac{d\hat V}{dt}\Big|_{\eqref{z1z2sys2}}= 2C_1 \beta_{n,q,1} t^{-\frac{n}{q}} \left\{z_1^2+(\nu_n^2+\delta_{\nu_n,0})\beta_{n,q,1}^{-2}z_2^2+\mathcal O(t^{-\epsilon})\mathcal O(|{\bf z}|^2)\right\}+\mathcal O(t^{-\frac{n+M}{q}})\mathcal O(|{\bf z}|)
\end{gather*}
as $|{\bf z}|\to 0$ and $t\to\infty$. Hence, there exist $\Delta_2>0$ and $t_2\geq t_0$ such that
\begin{gather*}
\hat B_- |{\bf z}|^2\leq \hat V(z_1,z_2,t)\leq \hat B_+ t^{1-\frac{n}{q}}|{\bf z}|^2,\quad
\frac{d\hat V}{dt}\Big|_{\eqref{z1z2sys2}}\leq t^{-\frac{n}{q}}\left(-\hat\gamma |{\bf z}|^2+ t^{-\frac{M}{q}} \hat B_\ast |{\bf z}|\right)
\end{gather*}
for all $|{\bf z}|\leq \Delta_2$ and $t\geq t_2$ with some constants $\hat B_\pm>0$, $\hat B_\ast>0$ and $\hat \gamma= C_1|\beta_{n,q,1}|\min\{1,(\nu_n^2+\delta_{\nu_n,0})\beta_{n,q,1}^{-2}\}>0$. 
Let $M> (q-n)/2$. Then, for all $\varepsilon\in (0,\Delta_2)$ there exist
\begin{gather*}
\delta_\varepsilon=\frac{2\hat B_\ast }{\hat \gamma}t_\varepsilon^{-\frac{2M-(q-n)}{2q}}, \quad
t_\varepsilon=\max\left\{1, t_2, \left(\frac{4\hat B_\ast}{\hat \gamma \varepsilon}\sqrt{\frac{\hat B_+}{\hat B_-}}\right)^{\frac{ q}{M}}\right\}
\end{gather*}
such that
\begin{gather*}
\frac{d\hat V}{dt}\Big|_{\eqref{z1z2sys2}}\leq t^{-\frac{n}{q}}|{\bf z}|^2\left(-\hat \gamma+t_\varepsilon^{-\frac{2M-(q-n)}{2q}}\frac{\hat B_\ast}{\delta_\varepsilon}\right)\leq -t^{-\frac{n}{q}}\frac{\hat \gamma |{\bf z}|^2}{2} <0
\end{gather*}
for all $\delta_\varepsilon  t^{-\frac{q-n}{2q}}\leq |{\bf z}|\leq \varepsilon$ and $t\geq t_\varepsilon$. Moreover, the function $\hat V(z_1,z_2,t)$ satisfies the inequality \eqref{Vineq} with $m=q$. Hence, any solution $(z_1(t),z_2(t))$ of system \eqref{z1z2sys2} with initial data $|{\bf z}(t_s)|\leq \delta_\varepsilon t_s^{-\frac{q-n}{2q}}$, $t_s\geq t_\varepsilon$ cannot exit from the domain $|{\bf z}|\leq\varepsilon$ as $t> t_s$. Thus, for all $M>(q-n)/2$ there exists a solution of system \eqref{ysys2} such that $y_1(t)=\hat \Xi_M(t)+\mathcal O(t^{-\epsilon})$ and $y_2(t)=\hat Z_M(t)+\mathcal O(t^{-\epsilon})$ as $t\to\infty$.
\end{proof}

\section{Analysis of the full system}\label{sec5}

\begin{proof}[Proof of Theorem~\ref{Th2}]
Consider the case $n\neq m$. For definiteness, assume that $n<m$.

{\bf 1}. Let $\beta_{n,m,1}<0$ and $\beta_{n,m,2}<0$.
Substituting $r(t)=\rho_\ast(t)+y_1(t)$, $\phi(t)=\phi_\ast(t)+y_2(t)$ into \eqref{rpsi}, we obtain
\begin{gather}\label{ysys3}
 \frac{dy_1}{dt}= \widehat{\mathcal F}_N(y_1,y_2,t)+\widetilde{\mathcal F}_N(y_1,y_2,t), \quad 
\frac{dy_2}{dt}= \widehat{\mathcal G}_N(y_1,y_2,t)+\widetilde{\mathcal G}_N(y_1,y_2,t), 
\end{gather}
where
\begin{align*}
\widehat{\mathcal F}_N(y_1,y_2,t)=&\, \widehat\Lambda_N(\rho_\ast(t)+y_1,\phi_\ast(t)+y_2,t)-\widehat\Lambda_N(\rho_\ast(t),\phi_\ast(t),t), \\ 
\widehat{\mathcal G}_N(y_1,y_2,t)=&\, \widehat\Omega_N(\rho_\ast(t)+y_1,\phi_\ast(t)+y_2,t)-\widehat\Omega_N(\rho_\ast(t),\phi_\ast(t),t), \\
\widetilde{\mathcal F}_N(y_1,y_2,t)=&\, \widetilde\Lambda_N(\rho_\ast(t)+y_1,\phi_\ast(t)+y_2,S(t),t), \\ 
\widetilde{\mathcal G}_N(y_1,y_2,t)=&\, \widetilde\Omega_N(\rho_\ast(t)+y_1,\phi_\ast(t)+y_2,S(t),t).
\end{align*}
It can easily be checked that
\begin{align*}
	 & \widehat{\mathcal F}_N(y_1,y_2,t)=t^{-\frac{n}{q}}\left(\lambda_n y_1+\nu_n y_2+\mathcal O(|{\bf y}|^2)+\mathcal O(t^{-\frac{1}{q}})\mathcal O(|{\bf y}|)\right), \\ 
	& \widehat{\mathcal G}_N(y_1,y_2,t)=t^{-\frac{m}{q}}\left(\eta_m y_1+\omega_m y_2+\mathcal O(|{\bf y}|^2)+\mathcal O(t^{-\frac{1}{q}})\mathcal O(|{\bf y}|)\right),\\
	&  \widetilde{\mathcal F}_N(y_1,y_2,t)=\mathcal O(t^{-\frac{N+1}{q}}),\\
	&  \widetilde{\mathcal G}_N(y_1,y_2,t)=\mathcal O(t^{-\frac{N+1}{q}})
\end{align*}
as $|{\bf y}|=\sqrt{y_1^2+y_2^2}\to 0$ and $t\to\infty$. Consider the Lyapunov function candidate for system \eqref{ysys3} in the form $V(y_1,y_2,t)\equiv L(y_1,y_2,t)$, where
$L(y_1,y_2,t)$ is defined by \eqref{LFunc} and \eqref{C1C2}. The derivative of $V(y_1,y_2,t)$ along the trajectories of system \eqref{ysys3} satisfies
\begin{align*}
\frac{dV}{dt}\Big|_{\eqref{ysys3}}= &  2C_1 \beta_{n,m,1} t^{-\frac{n}{q}} \left\{y_1^2+(\nu_n^2+\delta_{\nu_n,0})\beta_{n,m,1}^{-2}y_2^2 +\mathcal O(|{\bf y}|^3)+\mathcal O(t^{-\frac{1}{q}})\mathcal O(|{\bf y}|^2)\right\}\\
& +\mathcal O(t^{-\frac{N+1+n-m}{q}})\mathcal O(|{\bf y}|)
\end{align*}
as $|{\bf y}|\to 0$ and $t\to\infty$. Hence, there exist $\Delta_1>0$ and $t_1\geq t_0$ such that 
\begin{gather*} 
B_- |{\bf y}|^2\leq V(y_1,y_2,t)\leq B_+ t^{\frac{m-n}{q}}|{\bf y}|^2,\quad
\frac{dV}{dt}\Big|_{\eqref{ysys3}}\leq t^{-\frac{n}{q}}\left(-\gamma |{\bf y}|^2+ t^{-\frac{N+1-m}{q}} B_\ast |{\bf y}|\right)
\end{gather*}
 for all $|{\bf y}|\leq \Delta_1$ and $t\geq t_1$ with some constants $B_\pm>0$, $B_\ast>0$ and $\gamma=C_1|\beta_{n,m,1}|\min\{1,(\nu_n^2+\delta_{\nu_n,0})\beta_{n,m,1}^{-2}\}>0$. Taking $N\geq (3m-n)/2$ ensures that for all $\varepsilon\in (0,\Delta_1)$ there exist
\begin{gather*}
\delta_\varepsilon=\frac{2B_\ast }{\gamma}t_\varepsilon^{-\frac{1}{q}}, \quad
t_\varepsilon=\max\left\{1, t_1, \left(\frac{4B_\ast}{\gamma \varepsilon}\sqrt{\frac{B_+}{B_-}}\right)^{q}\right\}
\end{gather*}
such that
\begin{gather*}
\frac{dV}{dt}\Big|_{\eqref{ysys3}}\leq t^{-\frac{n}{q}}|{\bf y}|^2\left(-\gamma+t_\varepsilon^{-\frac{1}{q}}\frac{B_\ast}{\delta_\varepsilon}\right)\leq -t^{-\frac{n}{q}}\frac{\gamma |{\bf y}|^2}{2} <0
\end{gather*}
for all $\delta_\varepsilon  t^{-\frac{m-n}{2q}}\leq |{\bf y}|\leq \varepsilon$ and $t\geq t_\varepsilon$. Combining this with the inequalities \eqref{Vineq} we see that any solution $(y_1(t),y_2(t))$ of system \eqref{ysys3} with initial data $|{\bf y}(t_s)|\leq \delta_\varepsilon t_s^{-(m-n)/(2q)}$, $t_s\geq t_\varepsilon$ cannot exit from the domain $|{\bf y}|\leq\varepsilon$ as $t> t_s$.  Returning to the variables $(r,\phi)$ and taking into account \eqref{ch1} and \eqref{ch2}, we obtain \eqref{ineqrvarphi}.

{\bf 2}. Now, let $\beta_{n,m,1}>0$, $\beta_{n,m,2}>0$ and $\max\{n,m\}<q$.  Substituting $r(t)=\varrho_\ast+y_1(t)$, $\phi(t)=\varphi_\ast+y_2(t)$ into \eqref{rpsi}, we obtain system \eqref{ysys3} with 
\begin{align*}
\widehat{\mathcal F}_N(y_1,y_2,t)\equiv  &\, \widehat\Lambda_N(\varrho_\ast+y_1,\varphi_\ast+y_2,t), \\ 
\widehat{\mathcal G}_N(y_1,y_2,t)\equiv &\, \widehat\Omega_N(\varrho_\ast+y_1,\varphi_\ast+y_2,t), \\
\widetilde{\mathcal F}_N(y_1,y_2,t)\equiv &\, \widetilde\Lambda_N(\varrho_\ast+y_1,\varphi_\ast+y_2,S(t),t), \\ 
\widetilde{\mathcal G}_N(y_1,y_2,t)\equiv &\, \widetilde\Omega_N(\varrho_\ast+y_1,\varphi_\ast+y_2,S(t),t).
\end{align*}
It follows from \eqref{R1R2} that
\begin{align*}
	 & \widehat{\mathcal F}_N(y_1,y_2,t)=t^{-\frac{n}{q}}\left(\lambda_n y_1+\nu_n y_2+\mathcal O(|{\bf y}|^2)+\mathcal O(t^{-\frac{1}{q}})\right), \\ 
	& \widehat{\mathcal G}_N(y_1,y_2,t)=t^{-\frac{m}{q}}\left(\eta_m y_1+\omega_m y_2+\mathcal O(|{\bf y}|^2)+\mathcal O(t^{-\frac{1}{q}})\right),\\
	&  \widetilde{\mathcal F}_N(y_1,y_2,t)=\mathcal O(t^{-\frac{N+1}{q}}),\\
	&  \widetilde{\mathcal G}_N(y_1,y_2,t)=\mathcal O(t^{-\frac{N+1}{q}})
\end{align*}
as $|{\bf y}|\to 0$ and $t\to\infty$. The Lyapunov function candidate is considered in the form $V(y_1,y_2,t)\equiv L(y_1,y_2,t)$, where
$L(y_1,y_2,t)$ is defined by \eqref{LFunc} and \eqref{C1C2}. 
The derivative of $V(y_1,y_2,t)$ is given by 
\begin{align*}
\frac{dV}{dt}\Big|_{\eqref{ysys3}}= &2C_1 \beta_{n,m,1} t^{-\frac{n}{q}} \left\{y_1^2+(\nu_n^2+\delta_{\nu_n,0})\beta_{n,m,1}^{-2}y_2^2 +\mathcal O(|{\bf y}|^3)+\mathcal O(t^{-\frac{1}{q}})\mathcal O(|{\bf y}|)\right\} \\
& +\mathcal O(t^{-\frac{N+1+n-m}{q}})\mathcal O(|{\bf y}|)
\end{align*}
as $|{\bf y}|\to 0$ and $t\to\infty$. Hence, there exist $N_0\geq m$, $\varepsilon>0$ and $t_1\geq t_0$ such that  
\begin{gather*} 
B_- |{\bf y}|^2\leq V(y_1,y_2,t)\leq B_+ t^{\frac{m-n}{q}}|{\bf y}|^2,\quad
\frac{dV}{dt}\Big|_{\eqref{ysys3}}\geq t^{-\frac{n}{q}}\left(\gamma |{\bf y}|^2- t^{-\frac{1}{q}} B_\ast |{\bf y}|\right)
\end{gather*}
 for all $N\geq N_0$, $|{\bf y}|\leq \varepsilon$ and $t\geq t_1$ with some constants $B_\pm>0$, $B_\ast>0$ and $\gamma=C_1|\beta_{n,m,1}|\min\{1,(\nu_n^2+\delta_{\nu_n,0})\beta_{n,m,1}^{-2}\}>0$. It follows that for all $\delta\in (0,\varepsilon)$ there exists  
$t_\ast=\max\{1, t_1,({2C_\ast}/(\gamma \delta))^{q}\}$
such that
\begin{gather*}
\frac{dV}{dt}\Big|_{\eqref{ysys3}}\geq t^{-\frac{n}{q}}|{\bf y}|^2\left(\gamma-t_\ast^{-\frac{1}{q}}\frac{B_\ast}{\delta}\right)\geq t^{-\frac{n}{q}} \frac{\gamma |{\bf y}|^2}{2}\geq t^{-\frac{m}{q}} \hat \gamma V
\end{gather*}
for all $\delta  \leq |{\bf y}|\leq \varepsilon$ and $t\geq t_\ast$ with $\hat \gamma=\gamma / (2 B_+)$.   Integrating the last inequality as $t\geq t_s\geq t_\ast$ and taking $|{\bf y}(t_s)|=\delta$, we obtain
\begin{gather*}
|{\bf y}(t)|^2\geq \frac{\delta^2 B_-}{B_+} t^{-\frac{m-n}{q}}\exp\left\{\frac{q\hat \gamma}{q-m}\left(t^{1-\frac{m}{q}}-t_s^{1-\frac{m}{q}}\right)\right\}.
\end{gather*}
Therefore, there exists $t_e>t_s$ such that $|{\bf y}(t_e)|\geq \varepsilon$. Returning to the variables $(\varrho,\varphi)$, we obtain \eqref{ineqrvarphi2}.

Consider now the case $n=m$. We substitute ${\bf y}={\bf T}_0 {\bf z}$ into \eqref{ysys3} with the matrix ${\bf T}_0$ defined by \eqref{T0}. Then, taking the Lyapunov function candidate in the form \eqref{L0} and repeating the arguments used above, we obtain \eqref{ineqrvarphi} or \eqref{ineqrvarphi2}.
\end{proof}

\begin{proof}[Proof of Theorem~\ref{Th3}]
Substituting $r(t)=\varrho_\ast+y(t)$ into \eqref{rpsi}, we obtain
\begin{gather}\label{ypsi}
\frac{dy}{dt}=\widehat {\mathcal F}_N(y,\psi,t)+\widetilde {\mathcal F}_N(y,\psi,t), \quad 
\frac{d\psi}{dt}=\widehat {\Omega}_N(\varrho_\ast+y,\psi,t)+\widetilde {\Omega}_N(\varrho_\ast+y,\psi,S(t),t),
\end{gather}
where $\widehat {\mathcal F}_N(y,\psi,t)\equiv \widehat\Lambda_N(\varrho_\ast+y,\psi,t)$, $\widetilde {\mathcal F}_N(y,\psi,t)\equiv \widetilde\Lambda_N(\varrho_\ast+y,\psi,S(t),t)$. 
Note that 
\begin{gather*}
	  \widehat{\mathcal F}_N(y,\psi,t)=t^{-\frac{n}{q}}\left(\ell_n(\psi) y+\mathcal O(y^2)+\mathcal O(t^{-\frac{1}{q}})\right), \quad
		\widetilde{\mathcal F}_N(y,\psi,t)=\mathcal O(t^{-\frac{N+1}{q}})
\end{gather*}
as $y\to 0$ and $t\to\infty$ uniformly for all $\psi\in\mathbb R$. Since $\ell_n(\psi)\neq 0$ for all $\psi\in\mathbb R$, we see that there exists $\ell_\ast>0$ such that $|\ell_n(\psi)|\geq \ell_\ast$ for all $\psi\in\mathbb R$. It follows that there exist $\Delta_1>0$ and $t_1\geq t_0$ such that 
\begin{align*}
\frac{d|y|}{dt}\leq & \, - t^{-\frac{n}{q}}\left(\frac{\ell_\ast |y|}{2}- t^{-\frac{1}{q}}B_\ast\right), & \left|\frac{d\psi}{dt}\right|&\geq t^{-\frac{m}{q}} P_\ast & \text{if} \quad & \ell_n(\psi)<0, \\
\frac{d|y|}{dt}\geq & \, t^{-\frac{n}{q}}\left(\frac{\ell_\ast |y|}{2}- t^{-\frac{1}{q}}B_\ast\right), & \left|\frac{d\psi}{dt}\right|&\geq t^{-\frac{m}{q}} P_\ast & \text{if} \quad & \ell_n(\psi)>0
\end{align*}
for all $|y|\leq \Delta_1$, $\psi\in\mathbb R$ and $t\geq t_1$ with some constants $B_\ast>0$ and $P_\ast>0$.

If $\ell_n(\psi)<0$, then for all $\varepsilon\in (0,\Delta_1)$ there exist
\begin{gather*}
\delta_\varepsilon=\frac{4B_\ast t_\varepsilon^{-\frac{1}{q}}}{\ell_\ast}, \quad t_\varepsilon=\max\left\{t_1,\left(\frac{8 B_\ast}{\ell_\ast \varepsilon}\right)^q\right\}
\end{gather*}
such that 
\begin{gather*}
\frac{d|y|}{dt}\leq -t^{-\frac n q}|y|\left(\frac{\ell_\ast}{2}-t_\varepsilon^{-\frac 1q}\frac{B_\ast}{\delta_\varepsilon}\right)\leq -t^{-\frac n q}\frac{\ell_\ast|y|}{4}
\end{gather*}
for all $\delta_\varepsilon\leq |y|\leq \varepsilon$, $\psi\in\mathbb R$ and $t\geq t_\varepsilon$. Therefore, any solution $(y(t),\psi(t))$ of system \eqref{ypsi} with initial data $|y(t_s)|\leq \delta_\varepsilon$, $\psi(t_s)\in\mathbb R$, $t_s\geq t_\varepsilon$ satisfies the inequality $|y(t)|<\varepsilon$ for all $t\geq t_s$. From the second equation in \eqref{ypsi} it follows that $\psi(t)\to \infty$ as $t\to\infty$.

If $\ell_n(\psi)>0$, then for all $\delta\in (0,\Delta_1)$ there exists $t_\ast=\max\{t_1,(4B_\ast/(\ell_\ast \delta))^q\}$ such that $d|y|/dt\geq  t^{-n/q}|y|(\ell_\ast/2-t_\ast^{-1/q}B_\ast/\delta)\geq t^{-n/q}\ell_\ast|y|/4$ for all $\delta\leq |y|\leq \Delta_1$, $\psi\in\mathbb R$ and $t\geq t_\ast$. Integrating the last inequality as $t\geq t_s$ with $|y(t_\ast)|=\delta$ and $t_s\geq t_\ast$, we obtain 
\begin{gather*}
|y(t)|\geq \delta \exp\left(\frac{\ell_\ast}{4}\int\limits_{t_s}^t \tau^{-\frac{n}{q}}\, d\tau\right).
\end{gather*}
Therefore, there exists $t_e>t_s$ such that $|y(t_e)|=\Delta_1$.
\end{proof}

\section{Examples}\label{sec6}

\subsection{Example 1.} Consider again system \eqref{Ex0}. It was shown in Section~\ref{sec1} that this system corresponds to \eqref{PS} with $q=\omega=1$ and the functions $f(\varrho,\varphi,S,t)$, $g(\varrho,\varphi,S,t)$ defined by \eqref{fgex0}. Let $s_0=1$, then there exist $\kappa=\varkappa=1$ such that the resonance condition \eqref{rc} holds. It can easily be checked that the change of variables described in Theorem~\ref{Th1} with $N=1$ transforms the system into
\begin{gather*}
\begin{split}
& \frac{dr}{dt}=t^{-1}\Lambda_1(r,\psi)+\widetilde \Lambda_1(r,\psi,S(t),t), \\
& \frac{d\psi}{dt}=t^{-1}\Omega_1(r,\psi)+\widetilde \Omega_1(r,\psi,S(t),t),
\end{split}
\end{gather*}
where
\begin{gather*}
\Lambda_1(r,\psi)\equiv -\frac{(4a+3c r^2)\cos\psi}{8}, \quad 
\Omega_1(r,\psi)\equiv -s_1+ \frac{(4a+c r^2)\sin\psi}{8r},
\end{gather*}
and $\widetilde \Lambda_1(r,\psi,S,t)=\mathcal O(t^{-2})$, $\widetilde \Omega_1(r,\psi,S,t)=\mathcal O(t^{-2})$ as $t\to\infty$ uniformly for all $r\in (0,\mathcal R]$, $(\psi,S)\in\mathbb R^2$ with any fixed $\mathcal R>0$. We see that assumption \eqref{asnm} holds with $n=m=1$.  

Let $a c<0$ and $12 s_1^2<|a c|$. Then, assumption \eqref{as1} holds with
\begin{gather*}
\varrho_\ast=\sqrt{-\frac{4a}{3c}}, \quad \varphi_\ast= (-1)^k \theta_\ast +\pi k, \quad k\in\mathbb Z, \quad \theta_\ast=\arcsin\frac{3s_1 \varrho_\ast}{a}.
\end{gather*}
It can easily be checked that 
\begin{gather*}
\lambda_n=-\frac{3c\varrho_\ast\cos\varphi_\ast}{4}, \quad 
\nu_n=0,\quad 
\eta_m=\frac{c\sin\varphi_\ast}{2}, \quad 
\omega_m=-\frac{c\varrho_\ast \cos\varphi_\ast}{4},
\end{gather*}
$\alpha_1^0=\beta_{n,m,1}=\lambda_n$ and $\alpha_2^0=\beta_{n,m,2}=\omega_m$.
It follows from Lemma~\ref{Lem1} and Theorem~\ref{Th2} that for all $k\in\mathbb Z$ the equilibrium $(\varrho_\ast,(-1)^k\theta_\ast+\pi k)$ in the limiting system and the corresponding regime in system \eqref{Ex0} are unstable if $(-1)^k c<0$. It also follows that if $(-1)^k c>0$, then the persistent phase-locking occurs and there are stable regimes for solutions of system \eqref{Ex0} with $\varrho(t)\approx \varrho_\ast$ and $\varphi(t)\approx  S(t)+(-1)^k\theta_\ast+\pi k$ as $t\to\infty$ (see~Fig.~\ref{FigEx1} and Fig.~\ref{FigEx0}, c).

\begin{figure}
\centering
{
   \includegraphics[width=0.4\linewidth]{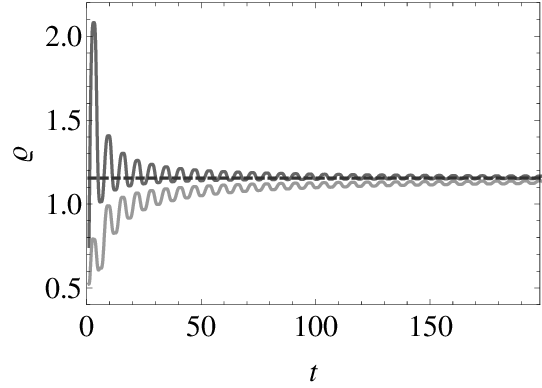}
}
\hspace{1ex}
{
   	\includegraphics[width=0.4\linewidth]{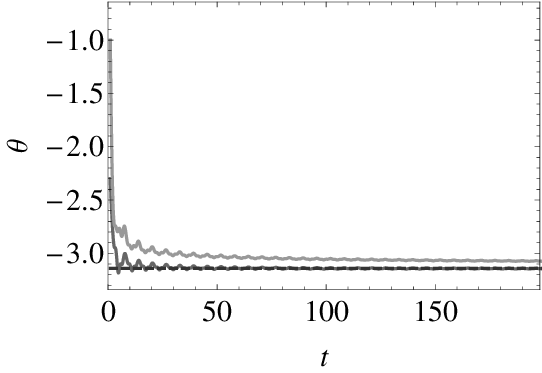}
}
\caption{\small The evolution of $\varrho(t)\equiv \sqrt{x_1^2(t)+x_2^2(t)}$ and $\theta(t)\equiv \varphi(t)-S(t)$, $\tan \varphi(t)=-x_2(t)/x_1(t)$ for solutions to system \eqref{Ex0} with $s_0=1$, $s_1=0$, $a=1$, $b=2$, $c=-1$ and different values of initial data. The dashed curves correspond to $\varrho(t)\equiv \varrho_\ast$ and $\theta(t)\equiv -\pi$, where $\varrho_\ast=2/\sqrt{3}$.} \label{FigEx1}
\end{figure}

\subsection{Example 2.} Consider 
\begin{gather}\label{Ex2}
\frac{dx_1}{dt}=x_2, \quad \frac{dx_2}{dt}=-x_1+t^{-\frac{1}{2}}\mathcal Z(x_2,S(t)),
\end{gather}
where
\begin{gather*}
\mathcal Z(x,S)\equiv (b(S)+c(S) x^2)x, \quad 
b(S)\equiv b_0+b_1\sin S, \quad 
c(S)\equiv c_0+c_1\sin S,
\end{gather*}
$S(t)\equiv 2 t+2 s_1 t^{1/2}$ with $b_i,c_i,s_i\in\mathbb R$. It can easily be checked that system \eqref{Ex2} in the variables $\varrho=\sqrt{x_1^2+x_2^2}$, $\varphi=-\arctan(x_2/x_1)$ takes the form \eqref{PS} with $q=2$, $\omega=1$, 
\begin{gather*}
f(\varrho,\varphi,S,t)\equiv -t^{-\frac{1}{2}}\mathcal Z(-\varrho\sin\varphi,S)\sin\varphi, \quad
g(\varrho,\varphi,S,t)\equiv -t^{-\frac{1}{2}}\varrho^{-1}\mathcal Z(-\varrho\sin\varphi,S)\cos\varphi.
\end{gather*} 
Note that the resonance condition \eqref{rc} is satisfied with $\kappa=1$, $\varkappa=2$ such that. Moreover, the transformation described in Theorem~\ref{Th1} with $N=1$ reduces the system to
\begin{align*}
& \frac{dr}{dt}=t^{-\frac12}\Lambda_1(r,\psi)+\widetilde \Lambda_1(r,\psi,S(t),t), \\
& \frac{d\psi}{dt}=t^{-\frac12}\Omega_1(r,\psi)+\widetilde \Omega_1(r,\psi,S(t),t),
\end{align*}
where
\begin{gather*}
\Lambda_1(r,\psi)\equiv \frac{r}{8} \left(4b_0+3 c_0 r^2 + 2(b_1 + c_1 r^2) \sin 2 \psi\right), \quad 
\Omega_1(r,\psi)\equiv \frac{1}{8} \left(-4 s_1 + (2 b_1 + c_1 r^2) \cos 2 \psi\right),
\end{gather*}
and $\widetilde \Lambda_1(r,\psi,S,t)=\mathcal O(t^{-1})$, $\widetilde \Omega_1(r,\psi,S,t)=\mathcal O(t^{-1})$ as $t\to\infty$ uniformly for all $r\in (0,\mathcal R]$, $(\psi,S)\in\mathbb R^2$ with any $\mathcal R>0$. It follows that assumption \eqref{asnm} holds with $n=m=1$.  

Let $2b_0/b_1=3c_0/(2c_1)=\delta>1$. If $b_1 c_1<0$ and $|b_1|<4|s_1|$, then there is $\varrho_\ast=\sqrt{-b_1/c_1}$ such that the condition \eqref{as2} holds with $\ell_n(\psi)=-b_1 (\delta+\sin 2\psi)/2$ and $\Omega_m(\varrho_\ast,\psi)\equiv (b_1 \cos 2\psi-4s_1)/8\neq 0$ for all $\psi\in\mathbb R$.
It follows from Theorem~\ref{Th3} that if $b_1>0$, then there is a stable regime for solutions of system \eqref{PS} such that $\varrho(t)\approx \varrho_\ast$ and $\varphi-S(t)/2\to \infty$ as $t\to\infty$. This regime is unstable if $b_1<0$ (see~Fig.~\ref{FigEx21}). 
\begin{figure}
\centering
{
   \includegraphics[width=0.4\linewidth]{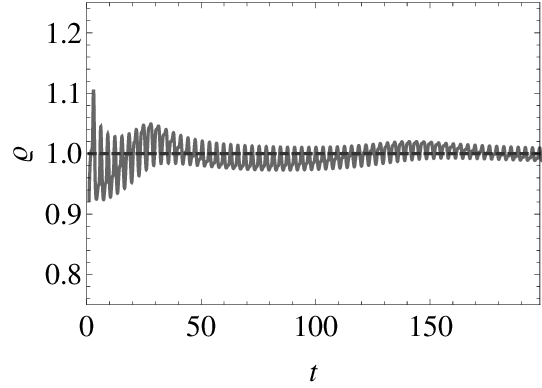}
}
\hspace{1ex}
{
   	\includegraphics[width=0.4\linewidth]{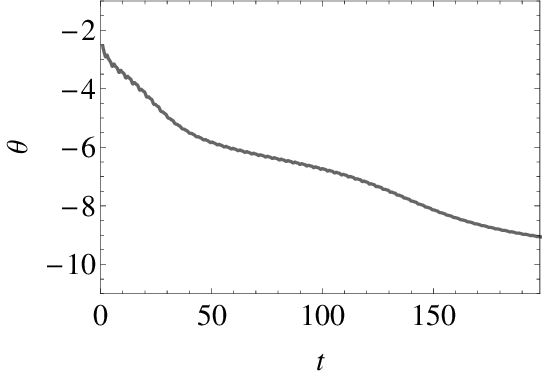}
}
\caption{\small The evolution of $\varrho(t)\equiv \sqrt{x_1^2(t)+x_2^2(t)}$ and $\theta(t)\equiv \varphi(t)-S(t)/2$, $\tan \varphi(t)=-x_2(t)/x_1(t)$ for solutions to system \eqref{Ex2} with $s_1=1/2$, $b_0=3/2$, $b_1=1$, $c_0=-2$, $c_1=-1$. The dashed curve corresponds to $\varrho(t)\equiv \varrho_\ast$, where $\varrho_\ast=1$.} \label{FigEx21}
\end{figure}

If $b_1 c_1<0$ and $|b_1|>4|s_1|$, then assumption \eqref{as1} holds with 
\begin{gather*}
\varrho_\ast=\sqrt{-\frac{b_1}{c_1}}, \quad \varphi_\ast= \pm \theta_\ast+\pi k, \quad k\in\mathbb Z, \quad 
\theta_\ast=\frac{1}{2}\arccos \frac{4s_1}{b_1}.
\end{gather*}
In this case, we have
\begin{gather*}
\lambda_n=-\frac{b_1 (\delta+\sin 2\varphi_\ast)}{2}, \quad 
\nu_n=0,\quad 
\eta_m=\frac{c_1 \varrho_\ast \cos 2\varphi_\ast}{2}, \quad 
\omega_m=-\frac{b_1 \sin 2\varphi_\ast}{4},
\end{gather*}
$a_1^0=d_{n,m,1}=\lambda_n$ and $a_2^0=d_{n,m,2}=\omega_m$.
From Lemma~\ref{Lem1} and Theorem~\ref{Th2} it follows that if $b_1<0$, then for all $k\in\mathbb Z$ the equilibrium $(\varrho_\ast,\theta_\ast+\pi k)$ in the limiting system and the corresponding regime in system \eqref{Ex2} are both unstable. If $b_1>0$, then the persistent phase-locking occurs and there are stable regimes for solutions of system \eqref{Ex2} with $\varrho(t)\approx \varrho_\ast$ and $\varphi(t)\approx  S(t)/2+\theta_\ast+\pi k$ as $t\to\infty$ (see~Fig.~\ref{FigEx22}). 

\begin{figure}
\centering
{
   \includegraphics[width=0.4\linewidth]{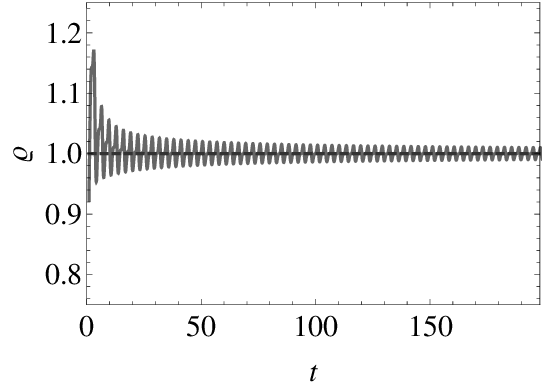}
}
\hspace{1ex}
{
   	\includegraphics[width=0.4\linewidth]{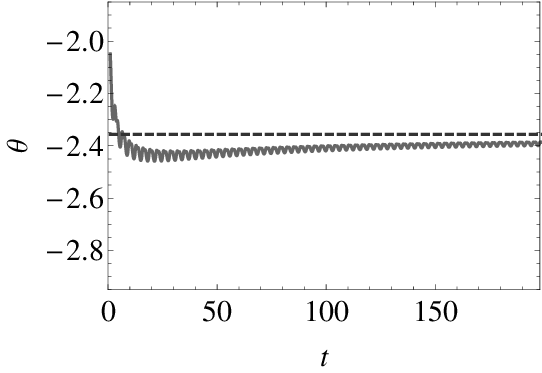}
}
\caption{\small The evolution of $\varrho(t)\equiv \sqrt{x_1^2(t)+x_2^2(t)}$ and $\theta(t)\equiv \varphi(t)-S(t)/2$, $\tan \varphi(t)=-x_2(t)/x_1(t)$ for solutions to system \eqref{Ex2} with $s_1=0$, $b_0=3/2$, $b_1=1$, $c_0=-2$, $c_1=-1$. The dashed curves correspond to $\varrho(t)\equiv \varrho_\ast$ and $\theta(t)\equiv \theta_\ast-\pi$, where $\varrho_\ast=1$ and $\theta_\ast=\pi/4$.} \label{FigEx22}
\end{figure}

\subsection{Example 3.} 
Finally, consider the system
\begin{gather}\label{Ex3}
\frac{dx_1}{dt}=x_2, \quad \frac{dx_2}{dt}=-x_1+t^{-\frac{1}{2}}\mathcal Z_1(x_2)+t^{-1}\mathcal Z_2(x_2,S(t)),
\end{gather}
where
\begin{gather*}
\mathcal Z_1(x)\equiv (b_0+c_0 x^2)x, \quad 
\mathcal Z_2(x,S)\equiv b_1 x \sin S, \quad 
S(t)\equiv 2 t+ s_2 \log t
\end{gather*}
with $b_0,b_1,c_0,s_2\in\mathbb R$.
Note that this system  in the variables $\varrho=\sqrt{x_1^2+x_2^2}$, $\varphi=-\arctan(x_2/x_1)$  takes the form \eqref{PS} with $q=2$, $\omega=1$,
\begin{align*}
f(\varrho,\varphi,S,t)&\equiv -\left(t^{-\frac{1}{2}}\mathcal Z_1(-\varrho\sin\varphi)+t^{-1}\mathcal Z_2(-\varrho\sin\varphi,S)\right)\sin\varphi, \\
g(\varrho,\varphi,S,t)&\equiv -\varrho^{-1}\left(t^{-\frac{1}{2}}\mathcal Z_1(-\varrho\sin\varphi)+t^{-1}\mathcal Z_2(-\varrho\sin\varphi,S)\right)\cos\varphi.
\end{align*} 
We see that the resonance condition \eqref{rc} is satisfied with $\kappa=1$ and $\varkappa=2$. It is not hard to check that the transformation described in Theorem~\ref{Th1} with $N=2$ reduces the system to
\begin{gather*}
\begin{split}
& \frac{dr}{dt}=t^{-\frac12}\Lambda_1(r,\psi)+t^{-1}\Lambda_2(r,\psi)+\widetilde \Lambda_2(r,\psi,S(t),t), \\
& \frac{d\psi}{dt}=t^{-1}\Omega_2(r,\psi)+\widetilde \Omega_2(r,\psi,S(t),t),
\end{split}
\end{gather*}
where
\begin{align*}
&\Lambda_1(r,\psi)\equiv \frac{r(4b_0+3 c_0 r^2)}{8}, \quad 
\Lambda_2(r,\psi)\equiv \frac{b_1 r \sin 2\psi}{4}, \\
&\Omega_2(r,\psi)\equiv \frac{1}{256} \left(-32 b_0^2 - 48 b_0 c_0 r^2 - 27 c_0^2 r^4 - 128 s_2 + 
 64 b_1\cos 2 \psi\right),
\end{align*}
and $\widetilde \Lambda_2(r,\psi,S,t)=\mathcal O(t^{-3/2})$, $\widetilde \Omega_2(r,\psi,S,t)=\mathcal O(t^{-3/2})$ as $t\to\infty$ uniformly for all $r\in (0,\mathcal R]$, $(\psi,S)\in\mathbb R^2$ with any $\mathcal R>0$. It follows that assumption \eqref{asnm} holds with $n=1$ and $m=2$.  

If  $b_0 c_0<0$ and $|b_0^2+8s_2|<4|b_1|$, then assumption \eqref{as1} holds with
\begin{gather*}
\varrho_\ast=\sqrt{-\frac{4b_0}{3c_0}}, \quad \varphi_\ast= \pm \theta_\ast+\pi k, \quad k\in\mathbb Z, \quad 
\theta_\ast=\frac{1}{2}\arccos \frac{b_0^2+8s_2}{4b_1}.
\end{gather*}
In this case,
\begin{gather*}
\lambda_n=-b_0, \quad 
\nu_n=0,\quad 
\eta_m=\frac{3 c_0 b_0 \varrho_\ast}{16}, \quad 
\omega_m=-\frac{b_1 \sin 2\varphi_\ast}{2},
\end{gather*}
$\alpha_1^0=\beta_{n,m,1}=\lambda_n$, $\alpha_2^0=\omega_m$, and $\beta_{n,m,2}=\omega_m+1/4$. If, in addition, $b_0>0$ and $\pm b_1 \sin 2\theta_\ast>1/2$, then it follows from Lemma~\ref{Lem2} and Theorem~\ref{Th2} that the persistent phase-locking occurs such that system \eqref{Ex3} has stable regimes with $\varrho(t)\approx \varrho_\ast$, $\varphi(t)\approx  S(t)/2\pm\theta_\ast+\pi k$, $k\in\mathbb Z$ (see~Fig.~\ref{FigEx31}).

\begin{figure}
\centering
{
   \includegraphics[width=0.4\linewidth]{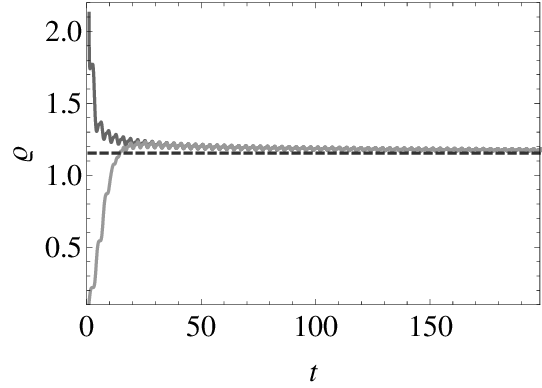}
}
\hspace{1ex}
{
   	\includegraphics[width=0.4\linewidth]{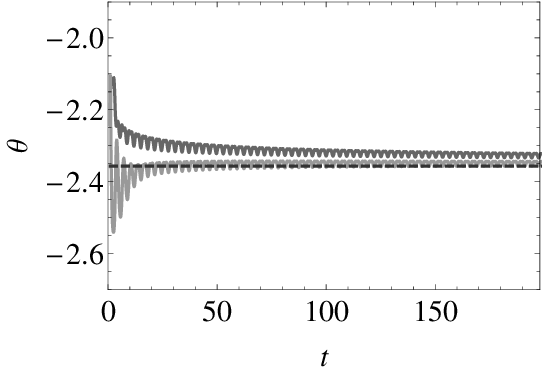}
}
\caption{\small The evolution of $\varrho(t)\equiv \sqrt{x_1^2(t)+x_2^2(t)}$ and $\theta(t)\equiv \varphi(t)-S(t)/2$, $\tan \varphi(t)=-x_2(t)/x_1(t)$ for solutions to system \eqref{Ex3} with $s_2=-1/8$, $b_0=b_1=1$, $c_0=-1$ and different values of initial data. The dashed curves correspond to $\varrho(t)\equiv \varrho_\ast$ and $\theta(t)\equiv \theta_\ast-\pi$, where $\varrho_\ast=2/\sqrt{3}$ and $\theta_0=\pi/4$.} \label{FigEx31}
\end{figure}

If  $b_0 c_0<0$ and $|b_0^2+8s_2|>4|b_1|$, then assumption \eqref{as2} holds with
\begin{gather*}
\varrho_\ast=\sqrt{-\frac{4b_0}{3c_0}}, \quad 
\ell_n(\psi)\equiv -b_0, \quad 
\Omega_2(\varrho_\ast,\psi)\equiv -\frac{1}{16}(b_0^2+8s_2-4b_1\cos 2\psi)\neq 0 \quad \forall \psi\in\mathbb R.
\end{gather*}
 It follows from Theorem~\ref{Th3} that if $b_0>0$, then there is a persistent phase drift regime for solutions of system \eqref{PS} such that $\varrho(t)\approx \varrho_\ast$ and $\varphi-S(t)/2\to \infty$ as $t\to\infty$  (see~Fig.~\ref{FigEx32}). This regime is unstable if $b_0<0$.

\begin{figure}
\centering
{
   \includegraphics[width=0.4\linewidth]{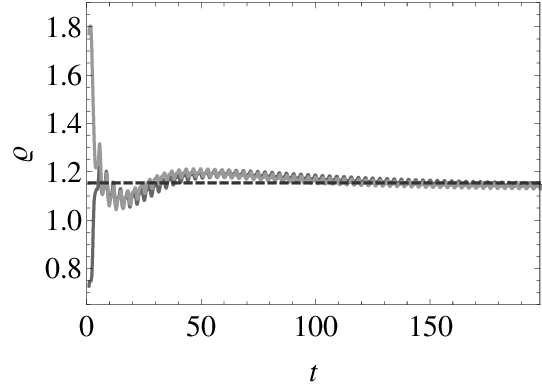}
}
\hspace{1ex}
{
   	\includegraphics[width=0.4\linewidth]{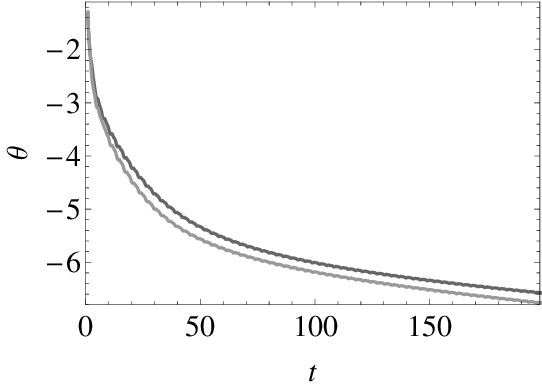}
}
\caption{\small The evolution of $\varrho(t)\equiv \sqrt{x_1^2(t)+x_2^2(t)}$ and $\theta(t)\equiv \varphi(t)-S(t)/2$, $\tan \varphi(t)=-x_2(t)/x_1(t)$ for solutions to system \eqref{Ex3} with $s_2=2$, $b_0=b_1=1$, $c_0=-1$ and different values of initial data. The dashed curve corresponds to $\varrho(t)\equiv \varrho_\ast$, where $\varrho_\ast=2/\sqrt{3}$.} \label{FigEx32}
\end{figure}

\section*{Acknowledgements}
The author thanks Prof. L.A. Kalyakin for useful discussions.

The research is made in the framework of realization of the development program of Scientific Educational Mathematical Center of Privolzhsky Federal District, agreement 075-02-2024-1444.


}
\end{document}